\newcommand{\leg}[2]{\genfrac{(}{)}{}{}{#1}{#2}}
\newtheorem{theorem}{Theorem}
\newtheorem{lemma}[theorem]{Lemma}
\newtheorem{proposition}[theorem]{Proposition}
\theoremstyle{definition}
\theoremstyle{remark}
\newtheorem*{remark}{Remark}
\newtheorem*{remarks}{Remarks}
\numberwithin{theorem}{section}
\numberwithin{proposition}{section}
\numberwithin{lemma}{section}
\numberwithin{corollary}{section}
\numberwithin{equation}{section}
\numberwithin{conjecture}{section}
\numberwithin{example}{section}
\setlist[enumerate,1]{before=}
\newcommand{\N}{\mathbb{N}}
\newcommand{\Z}{\mathbb{Z}}
\newcommand{\R}{\mathbb{R}}
\newcommand{\C}{\mathbb{C}}
\newcommand{\Q}{\mathbb{Q}}
\newcommand{\SL}{{\text {\rm SL}}}
\newcommand{\sgn}{\operatorname{sgn}}
\newcommand{\re}{\textnormal{Re}}
\def\H{\mathbb{H}}
\renewcommand{\pmod}[1]{\  \,  \left( \mathrm{mod} \,  #1 \right)}
\DeclareMathOperator{\tr}{tr}
\newcommand{\e}{\mathfrak{e}}
\renewcommand{\pmod}[1]{\  \,  \left( \mathrm{mod} \,  #1 \right)}
\DeclareMathOperator{\Gr}{Gr}
\newcommand{\CT}{\mathrm{CT}}
\DeclareMathOperator{\reg}{reg}
\DeclareMathOperator{\spt}{spt}
\begin{document}

\title[Higher Siegel theta lifts on Lorentzian lattices and Eichler--Selberg type relations]{Higher Siegel theta lifts on Lorentzian lattices, harmonic Maass forms, and Eichler--Selberg type relations}

\author{Joshua Males}
\address{Department of Mathematics, Machray Hall, University of Manitoba, Winnipeg,
	Canada}
\email{joshua.males@umanitoba.ca}

\begin{abstract}
We investigate so-called ``higher" Siegel theta lifts on Lorentzian lattices in the spirit of Bruinier--Ehlen--Yang and Bruinier--Schwagenscheidt. We give a series representation of the lift in terms of Gauss hypergeometric functions, and evaluate the lift as the constant term of a Fourier series involving the Rankin--Cohen bracket of harmonic Maass forms and theta functions. Using the higher Siegel lifts, we obtain a vector-valued analogue of Mertens' result stating that the Rankin--Cohen bracket of the holomorphic part of a harmonic Maass form of weight $\frac{3}{2}$ and a unary theta function, plus a certain form, is a holomorphic modular form. As an application of these results, we offer a novel proof of a conjecture of Cohen which was originally proved by Mertens, as well as a novel proof of a theorem of Ahlgren and Kim, each in the scalar-valued case.
\end{abstract}

\maketitle

\section{Introduction}
In recent years, there have been many investigations into certain theta lifts and their relationship to modular objects. Perhaps one of the most striking applications is in realising rationality and algebraicity results. In a recent breakthrough paper, Bruinier, Ehlen, and Yang \cite{bruinier2020greens} made a major advance towards the Gross--Zagier conjecture by proving that a certain two-variable Green function evaluated at CM points in one variable and an average over CM points in the other variable takes algebraic values. A pivotal result that the authors there used was the connection between the Green function and a ``higher" Siegel theta lift\footnote{Note that the authors there use a different signature convention than the current paper.}. A further example lies in \cite{ANBMS}, where Alfes-Neumann, Bringmann, Schwagenscheidt, and the author used the higher Siegel theta lift\footnote{The paper \cite{ANBMS} also used the higher Millson theta lift.} on lattices of signature $(1,2)$ to investigate traces of cycle integrals of a certain cusp form. By relating the lift to the Fourier coefficients of certain modular objects called harmonic Maass forms, the results also gave an alternative proof of the rationality of such traces of cycle integrals, which had previously been shown in \cite{alfes2018rationality}.

More recently, Bruinier and Schwagenscheidt \cite{BS} investigated the Siegel theta lift on a Lorentzian lattice $L$ (that is, of signature $(1,n)$). For $\tau = u+iv \in \H$  and $z \in \Gr(L)$, the Grassmanian of $L$ i.e.\@ positive lines in $L \otimes \R$, this is given by
\begin{align*}
\int_{\mathcal{F}}^{\reg} \left\langle f(\tau) , \overline{\Theta_L(\tau,z)} \right\rangle v^{k} d\mu(\tau),
\end{align*}
where the regularised integral is defined by $\int_{\mathcal{F} }^{\reg} \coloneqq \lim_{T \rightarrow \infty} \int_{\mathcal{F}_T}$, where $\mathcal{F}_T$ denotes the standard fundamental domain for $\Gamma$ truncated at height $T$. Here, $\mathcal{F}$ is the standard fundamental domain for $\SL_2(\Z)$, $f$ is a vector-valued harmonic Maass form of appropriate weight, $\langle \cdot , \cdot \rangle$ is the natural bilinear pairing on the group ring $\C[L'/L]$ with $L'$ the dual lattice, and $d\mu= \frac{dudv}{v^2}$ is the usual invariant measure on $\H$. Moreover, $\Theta_L(\tau,z)$ is a vector-valued Siegel theta function defined on $\H\times \Gr(L)$. As a function of $\tau \in \H$, it transforms as a vector-valued modular form of weight $(1-n)/2$ with respect to the Weil representation $\rho_L$ associated to $L$, and is invariant in $z$ under the subgroup of $O(L)$ that fixes $L'/L$ pointwise. Explicitly, this means that for $(g,\phi)  \in \text{Mp}_2(\Z)$, the metaplectic extension of $\SL_2(\Z)$ (see Section \ref{Section: Weil representation}) and $h$ in the subgroup of of $O(L)$ that fixes $L'/L$ pointwise we have
\begin{align*}
	\Theta_L(g\tau,z) = \phi(\tau) \overline{\phi(\tau)}^{n} \rho_L(g,\phi) \Theta_L(\tau,z), \qquad \Theta_L(\tau, hz) = \Theta_L(\tau,z).
\end{align*}
 
In the present paper we consider an extension of this lift, first considered by Bruinier, Ehlen, and Yang in \cite{bruinier2020greens} for the $n=2$ case. Borrowing their terminology, we call this a higher Siegel theta lift. Let $k \coloneqq \frac{1-n}{2}$. The case of $j=0$ was studied centrally in the influential work of Bruinier, Imamo\={g}lu, and Funke \cite{BIF}. Since the case $j=0$ is well-understood, let $j \in \N$ throughout unless otherwise stated. For a harmonic Maass form $f \in H_{k-2j,L}$ (see Section \ref{Sec: harmonic M forms} for a precise definition) we consider
\begin{align*}
\Lambda^{\reg}_j \left(f, z\right)
\coloneqq
\int_{\mathcal{F}}^{\reg} \left\langle R_{k-2j}^{j}(f)(\tau) , \overline{\Theta_L(\tau,z)} \right\rangle v^{k} d\mu(\tau),
\end{align*}
where  $R_{\kappa}^{n} \coloneqq R_{\kappa+2n-2}\circ \dots \circ R_{\kappa}$ with $R_{\kappa}^0 \coloneqq \mathrm{id}$ is an iterated version of the Maass raising operator $R_{\kappa} \coloneqq 2i\frac{\partial}{\partial \tau}+ \frac{\kappa}{v}$. By the results of \cite[Section~2.3]{bruinier2004borcherds} for the Siegel theta function the integral converges for every $z \in \H$. 

This situation lies at the interface of \cite{bruinier2020greens,ANBMS} which included iterated raising operators but restricted to the case of $n=2$, and \cite{BS} where no iterated raising operators were included, but $n$ was unrestricted. We borrow techniques from each of these papers, and in Section \ref{Section: the lift} obtain a description of the lift $\Lambda^{\reg}$ as a series involving Gauss hypergeometric functions ${}_2F_1$, as well as a constant term of a Fourier expansion involving coefficients of $f$, a theta function, and a harmonic Maass form. We also note how one could obtain the Fourier expansion of the lift, but omit the calculation as it is not required here. Note that for $j=0$ we recover the lift studied by Bruinier \cite{bruinier2004borcherds} and Bruinier--Schwagenscheidt \cite{BS}, and for $n=2$ the lift used centrally in \cite{ANBMS}.

Moreover, in \cite[Theorem 1.2]{Mer1} Mertens used holomorphic projection to show that the Fourier coefficients of all scalar-valued mock modular forms of weight $\frac{3}{2}$ satisfy Eichler-Selberg type relations. We provide a vector-valued analogue of these relations, relying on the framework of the higher Siegel theta lifts for $j >0$. To state the result, we need some notation. When evaluated at a special point $w$ (see Section \ref{Sec: theta functions}) the Siegel theta function on $L$ splits as the tensor product $\Theta_{P} \otimes \Theta_N$ where $P$ is one-dimensional positive-definite, and $N$ is $n$-dimensional negative-definite. For $X \in L$ by $X_z$ we mean the projection of $X$ onto $z$, and furthermore we denote vector-valued Rankin--Cohen brackets by $[\cdot,\cdot]_\ell$. Finally, let $\mathcal{G}_P^+$ be the holomorphic part of a preimage of $\Theta_P$ under $\xi_{\kappa} \coloneqq 2iv^\kappa \overline{\frac{\partial}{\partial \overline{\tau}}}$. By computing the lift in two different ways, taking the difference and invoking Serre duality, we prove the following theorem (for undefined notation see Section \ref{Section: prelims}).
\begin{theorem}\label{Thm: MAIN}
	Let $L$ be an even lattice of signature $(1,1)$ and $w$ be a special point. Then for every $j > 0$ there is an explicit constant $C_j \in \R$ such that	
	\begin{align*}
	\left[\mathcal{G}_P^+(\tau), \Theta_{N^-}(\tau) \right]_j + C_j  \sum_{m =0}^{\infty} \sum_{\substack{X \in L'\\ q(X)=m}} \left(\sqrt{\lvert q(X_{w^\perp}) \rvert} - \sqrt{\lvert q(X_{w}) \rvert}\right)^{1+2j} q^m
	\end{align*}
	is a holomorphic vector-valued modular form of weight $2j+2$ for $\rho_L$.
\end{theorem}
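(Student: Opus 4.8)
My plan is to treat the regularised higher Siegel lift $\Lambda^{\reg}_j(\,\cdot\,,w)$ as a bilinear pairing on inputs $f \in H_{-2j,L}$, to compute it in the two ways furnished by Section~\ref{Section: the lift}, and to read off the theorem by comparing the outputs. Specialising to signature $(1,1)$ gives $n=1$ and $k=0$, so the raised integrand $R^{j}_{-2j}(f)$ has weight $0$ and pairs against $\overline{\Theta_L}$ with $v^{k}=1$. At the special point the Siegel theta function factors as $\Theta_L(\tau,w)=\Theta_P(\tau)\otimes\Theta_N(\tau)$ with $P$ positive definite and $N$ negative definite, both one-dimensional; since $N$ is negative definite we have $\overline{\Theta_N}=\Theta_{N^-}$, a holomorphic theta function of weight $\tfrac12$, while $\mathcal{G}_P$ is a harmonic Maass form of weight $\tfrac32$ with $\xi_{3/2}(\mathcal{G}_P)=\Theta_P$. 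The weights then balance, $[\mathcal{G}_P^+,\Theta_{N^-}]_j$ having weight $\tfrac32+\tfrac12+2j=2j+2$, and the hypothesis $j>0$ makes this weight strictly larger than $2$ --- exactly the regime where the regularised integral and the attendant boundary analysis are unproblematic, which I expect is why $j=0$ is excluded.

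For the first evaluation I would open up the pairing and integrate by parts in $\tau$. The crucial point is that $\overline{\Theta_P}$ is a $\xi$-image, so up to an explicit constant $\overline{\Theta_P}=v^{3/2}\,\partial_{\overline{\tau}}\mathcal{G}_P$; feeding this into the regularised integral and applying Stokes' theorem trades the antiholomorphic theta factor for $\mathcal{G}_P$ itself. Together with the iterated raising operator on $f$, which on a product is governed by the Rankin--Cohen differential, this identifies the lift with a regularised Bruinier--Funke-type pairing $\{f,\Phi\}$ in which $\Phi$ is built from the bracket $[\mathcal{G}_P,\Theta_{N^-}]_j$. Splitting $\mathcal{G}_P=\mathcal{G}_P^+ + \mathcal{G}_P^-$ separates
\[
[\mathcal{G}_P,\Theta_{N^-}]_j=[\mathcal{G}_P^+,\Theta_{N^-}]_j+[\mathcal{G}_P^-,\Theta_{N^-}]_j,
\]
the first summand being the holomorphic piece of $\Phi$ and the second, after the $v$-integral is carried out, the source of the correction term.

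The second evaluation uses the series representation of $\Lambda^{\reg}_j$ in terms of Gauss hypergeometric functions ${}_2F_1$ from Section~\ref{Section: the lift}. Specialising the Grassmannian variable to $w$, the projections $X_w$ and $X_{w^\perp}$ become explicit and the hypergeometric factors collapse to elementary expressions; organising the resulting binomial sum I expect the per-vector contribution to telescope to $\big(\sqrt{\lvert q(X_{w^\perp})\rvert}-\sqrt{\lvert q(X_w)\rvert}\big)^{1+2j}$, producing exactly the lattice sum with a closed-form constant $C_j$ assembled from the Gamma-factors of the raising operators and the hypergeometric normalisation. Equating the two evaluations of the single quantity $\Lambda^{\reg}_j(f,w)$ and isolating the non-holomorphic part shows that $[\mathcal{G}_P^+,\Theta_{N^-}]_j+C_j\sum_m\sum_{q(X)=m}(\cdots)^{1+2j}q^m$ is the holomorphic object $\Phi$ paired against $f$. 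Since $f$ ranges over a space dual to the weight $2j+2$ forms and the lift value $\{f,\Phi\}$ is manifestly invariant, Serre duality --- non-degeneracy of the pairing together with its compatibility with the Weil representation --- then forces $\Phi$ to transform as, and be holomorphic of weight $2j+2$ for, $\rho_L$.

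I expect the main obstacle to be the explicit collapse carried out in the second evaluation: matching the hypergeometric and incomplete-Gamma data coming from $\mathcal{G}_P^-$ and from the Rankin--Cohen operator against the elementary square-root difference, and pinning down $C_j$, is the genuinely delicate computation, since it must reproduce the precise exponent $1+2j$ and the cancellation between the $X_{w^\perp}$ and $X_w$ contributions. A secondary difficulty is bookkeeping in the regularisation: the raised integrand grows at the cusp, so justifying the integration by parts, controlling the boundary term from Stokes' theorem, and interchanging the $\tau$-integral with the lattice sum all require the growth estimates that the condition $j>0$, i.e. weight $2j+2>2$, makes available.
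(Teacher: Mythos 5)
Your proposal follows essentially the same route as the paper: compute $\Lambda^{\reg}_j(f,w)$ once via the series representation (Theorem \ref{Theorem: lift as series}, with the ${}_2F_1$ collapsing for $n=1$ to the square-root difference), once as the constant term involving $\left[\mathcal{G}_P^+,\Theta_{N^-}\right]_j$ (Theorem \ref{Theorem: theta lift at CM points}), then take the difference and invoke Serre duality (Proposition \ref{Prop: Serre duality}). One minor correction: in the paper the non-holomorphic piece $\left[\mathcal{G}_P^-,\Theta_{N^-}\right]_j$ does not survive the Stokes/constant-term evaluation at all (it decays at the cusp), so the lattice-sum correction comes entirely from the hypergeometric series evaluation rather than from a $v$-integral of $\left[\mathcal{G}_P^-,\Theta_{N^-}\right]_j$.
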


\begin{remarks}
	\begin{enumerate}
		\item One could obtain a similar vector-valued analogue of \cite[Theorem 1.3]{Mer1}, i.e.\@ for $\mathcal{G}_P^+$ replaced by the holomorphic part of harmonic Maass forms of weight $\frac{1}{2}$ (so including the classical mock theta functions of Ramanujan) by similar methods, replacing $\Theta_L$ by a modified theta function $\Theta_L^*$ as in \cite[Section 4.3]{BS}. In doing so, one could also obtain new recurrence relations for the classical mock theta functions. We leave the details for the interested reader.
		
		\item For $n \geq 2$ it is possible to obtain similar theorems. However, the form on the right-hand side becomes much more complicated. Using the contiguous Gauss hypergeometric functions to rewrite the output of Theorem \ref{Theorem: lift as series}, the forms should be linear combinations of those stated in Theorem \ref{Thm: MAIN} and similarly-shaped sums with factors of $\arcsin\left( \sqrt{\frac{q(X)}{q(X_{z^\perp})}} \right)$ (compare \cite[equation (3.3)]{BS}).
	\end{enumerate}
	
\end{remarks}

As an application, we obtain novel proofs of a conjecture of Cohen \cite{Cohen} which was originally proved by Mertens \cite[Theorem 1]{Mer}, as well as a theorem of Ahlgren and Kim \cite[Proposition 4]{AhlKim}, each of which is detailed below. For $\ell \geq 1$ and $n \geq 1$ we let
\begin{align*}
\lambda_\ell(n) = \frac{1}{2}\sum_{d \mid n}\min(d,n/d)^\ell,
\end{align*}
and
\begin{align*}
\mathcal{H}(\tau) = \sum_{n \geq 0}H(n)q^n,
\end{align*}
where for $n \in \N$, we let $H(n)$ be the Hurwitz class numbers, i.e.\@ the class number of binary quadratic forms of discriminant of discriminant $-n$, where the class containing $a(x^2+y^2)$ (resp.\@ $a(x^2+xy+y^2)$) is counted with multiplicity $\frac{1}{2}$ (resp.\@ $\frac{1}{3}$), and we set $H(0) \coloneqq -\frac{1}{12}$. Then $\mathcal{H}$ is a mock modular form of weight $\frac{3}{2}$, i.e. the holomorphic part of a scalar-valued harmonic Maass form of weight $\frac{3}{2}$. Let $\theta(\tau) \coloneqq \sum_{n \in \Z}q^{n^2}$ be the classical Jacobi theta function. For $f = \sum_n c_f(n)q^n$ we let $f^{\text{odd}} = \sum_{n \text{ odd}}c_f(n)q^n$.
Then we have the following theorem of Mertens \cite[Theorem 1]{Mer}. We offer a novel proof in Section \ref{Sec: mertens}.
\begin{theorem}\label{Thm: mertens}
	For $j \geq 0$ the function
	\[
	c_j[\mathcal{H},\theta]_j^\text{odd} + \sum_{n \geq 0}\lambda_{2j+1}(2n+1) q^{2n+1}
	\]
	with $c_j \coloneqq \frac{j! \sqrt{\pi}}{\Gamma\left(j+\frac{1}{2}\right)}$ is a holomorphic modular form of weight $2j + 2$ for $\Gamma_0(4)$. For $j > 0$ it is a cusp form.
\end{theorem}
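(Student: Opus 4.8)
The plan is to deduce Theorem~\ref{Thm: mertens} from the vector-valued statement of Theorem~\ref{Thm: MAIN} by specialising to a carefully chosen even Lorentzian lattice of signature $(1,1)$ and discriminant $4$, and then translating the resulting $\rho_L$-modularity into scalar modularity on $\Gamma_0(4)$. Concretely, I would take $L=\Z^2$ with $q((x,y))=x^2-y^2$, whose Gram matrix $\mathrm{diag}(2,-2)$ has discriminant group $(\Z/2\Z)^2$ of order $4$, and fix the special point $w=\R(1,0)$. With this choice the rank-one positive-definite theta $\Theta_P$ and the negative-definite $\Theta_{N^-}$ are (components of) the Jacobi theta function $\theta$, and the holomorphic part $\mathcal{G}_P^+$ of a $\xi_{3/2}$-preimage of $\Theta_P$ is, after identifying components, exactly Zagier's weight-$\tfrac32$ mock Eisenstein series $\mathcal{H}$, whose shadow is a multiple of $\theta$. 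Feeding these identifications into Theorem~\ref{Thm: MAIN} turns the vector-valued Rankin--Cohen bracket $[\mathcal{G}_P^+,\Theta_{N^-}]_j$ into the scalar bracket $[\mathcal{H},\theta]_j$, with the odd-coefficient projection $(\cdot)^{\mathrm{odd}}$ produced by restricting to the relevant component of $L'/L$.

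The crux is then the evaluation of the arithmetic sum appearing in Theorem~\ref{Thm: MAIN}. For $X=(x,y)\in L'$ with $q(X)=m$ one has $X_w=(x,0)$ and $X_{w^\perp}=(0,y)$, so $\sqrt{\lvert q(X_{w^\perp})\rvert}-\sqrt{\lvert q(X_w)\rvert}=\lvert y\rvert-\lvert x\rvert$, while $q(X)=x^2-y^2=(x-y)(x+y)=m$ has only finitely many solutions. Writing $\lvert x\rvert-\lvert y\rvert=d$ and $\lvert x\rvert+\lvert y\rvert=m/d$ for a factorisation $d\cdot(m/d)=m$, one finds $\lvert y\rvert-\lvert x\rvert=-\min(d,m/d)$, so that, after accounting for the signs of $x$ and $y$,
\begin{align*}
\sum_{\substack{X\in L'\\ q(X)=m}}\bigl(\sqrt{\lvert q(X_{w^\perp})\rvert}-\sqrt{\lvert q(X_w)\rvert}\bigr)^{1+2j}
= -\sum_{d\mid m}\min(d,m/d)^{2j+1}
= -2\,\lambda_{2j+1}(m),
\end{align*}
where the parity constraints forced by the chosen discriminant component restrict $m$ to odd values $m=2n+1$. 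It then remains to track the normalisation: the explicit constant of Theorem~\ref{Thm: MAIN} carries the $\Gamma$-factors coming from the hypergeometric/Rankin--Cohen normalisation of Theorem~\ref{Theorem: lift as series}, and it should work out to $C_j=-\Gamma(j+\tfrac12)/(2\,j!\sqrt{\pi})$. Multiplying the entire identity of Theorem~\ref{Thm: MAIN} through by $c_j=1/(-2C_j)=\tfrac{j!\sqrt{\pi}}{\Gamma(j+1/2)}$ then rescales the bracket by $c_j$ and, via the factor $-2$ above, rescales the $\lambda$-sum by $-2\,c_j C_j=1$, producing exactly the function in Theorem~\ref{Thm: mertens}.

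Finally I would convert modularity. Theorem~\ref{Thm: MAIN} gives a holomorphic vector-valued form of weight $2j+2$ for $\rho_L$; since $L$ has signature $(1,1)$ and discriminant $4$, the standard dictionary between vector-valued forms for the Weil representation and scalar forms on congruence subgroups yields a holomorphic modular form of weight $2j+2$ on $\Gamma_0(4)$, and the odd-component restriction produces the stated function. The cusp-form claim for $j>0$ follows by inspecting constant terms at the three cusps of $\Gamma_0(4)$: the $\lambda_{2j+1}$-sum begins at $q^1$, and the Rankin--Cohen bracket $[\mathcal{H},\theta]_j$ has no constant term once $j>0$, so the form vanishes at every cusp, whereas for $j=0$ the surviving constant (from $H(0)=-\tfrac1{12}$ and $\lambda_1(1)$) leaves only holomorphicity. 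I expect the main obstacle to be twofold: pinning down all normalisation constants so that $C_j$ genuinely reduces to the reciprocal of $-2c_j$, and making the passage from $\rho_L$-modularity to the scalar $\Gamma_0(4)$ statement with the correct odd projection fully rigorous rather than heuristic.
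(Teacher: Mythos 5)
Your proposal follows essentially the same route as the paper's proof: the same signature $(1,1)$ lattice (the paper writes it inside the isotropic plane $(\R^2,xy)$, you in diagonal coordinates) and the same special point, the identification of $\mathcal{G}_P^+$ with Zagier's mock Eisenstein series and of $\Theta_P,\Theta_{N^-}$ with components of $\theta$, the evaluation of the lattice sum as divisor sums of $\min(d,m/d)^{2j+1}$, Serre duality, and the passage to $\Gamma_0(4)$ by summing the two odd components of $L'/L$ and replacing $\tau$ by $4\tau$. The only loose end is the constant bookkeeping you yourself flag: your guessed value of $C_j$ differs from the paper's $C_j=2^{2j+3}\Gamma\left(j+\tfrac12\right)\Gamma\left(j+\tfrac32\right)/\Gamma(2j+2)=4\sqrt{\pi}\,\Gamma\left(j+\tfrac12\right)/j!$ essentially by the factor $-8\pi$ carried by Zagier's normalization $\mathcal{G}_P^+=-8\pi\sum_n H(n)q^{n/4}$ (plus the extra factor of $2$ from the sign combinations of $(X_1,X_2)$), which the paper tracks explicitly.
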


Furthermore, let $\spt$ be the classical smallest parts partition function of Andrews \cite{And} and $p(n)$ the integer partition function. Define 
\begin{align*}
g(\tau) = q^{-\frac{1}{24}} \sum_{n \geq 0} \left( \spt\left(n\right) +\frac{1}{12}(24n-1)p(n) \right) q^{n}.
\end{align*}
Let $\eta$ be the usual Dedekind $\eta$-function and define
\begin{align*}
G_k (\tau) \coloneqq -\sum_{r>s>0} \leg{12}{r^2-s^2} s^{k-1} q^{\frac{rs}{6}}.
\end{align*}
Then Zagier stated \cite[Section 6]{Zag} and Ahlgren and Kim proved \cite[Proposition 1.4]{AhlKim} the following theorem\footnote{For the $j=0$ case Ahlgren and Kim explicitly showed equality with the quasi-modular Eisenstein series $\frac{1}{12}E_2$.}. Again, we offer a novel proof in Section \ref{Sec: mertens}.
\begin{theorem}\label{Thm: AK}
	For every $j > 0$ we have that
	\begin{align*}
	G_{2j+2}(\tau) + 24^j \binom{2j}{j}^{-1} \left[g\left(\frac{\tau}{24}\right),\eta(\tau) \right]_j
	\end{align*}
	is a modular form of weight $2j+2$ on $\SL_2(\Z)$.
\end{theorem}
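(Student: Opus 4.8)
The plan is to derive Theorem~\ref{Thm: AK} from the vector-valued master identity of Theorem~\ref{Thm: MAIN}, specialised to a suitable even Lorentzian lattice $L$ of signature $(1,1)$ and special point $w$, in close parallel with the proof of Theorem~\ref{Thm: mertens}. The task is to match the two terms appearing in Theorem~\ref{Thm: MAIN} with the two terms of Theorem~\ref{Thm: AK}: the Rankin--Cohen bracket $\left[\mathcal{G}_P^+,\Theta_{N^-}\right]_j$ must be identified with a multiple of $\left[g(\tau/24),\eta(\tau)\right]_j$, and the arithmetic sum $\sum_m\sum_{q(X)=m}(\sqrt{\lvert q(X_{w^\perp})\rvert}-\sqrt{\lvert q(X_w)\rvert})^{1+2j}q^m$ with a multiple of $G_{2j+2}$.

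First I would fix the lattice so that the theta data reproduces the $\eta$-function. Since $\eta(\tau)=\sum_{n\geq 1}\leg{12}{n}q^{n^2/24}$ is a weight $\tfrac12$ unary theta function attached to the rank-one lattice with quadratic form $n\mapsto n^2/24$ and discriminant group of order $24$, I would take $L$ to be the signature $(1,1)$ lattice whose negative line $N$ realises this $\eta$-theta at $w$, with $P$ the complementary positive line. With this choice $\Theta_{N^-}$ becomes (a component of) $\eta$, and $\mathcal{G}_P^+$ is forced to be a weight $\tfrac32$ mock modular form whose shadow is the corresponding $\eta$-theta $\Theta_P$. The decisive input is that $g(\tau/24)$ is exactly such an object: the completed smallest-parts series is a weight $\tfrac32$ harmonic Maass form with shadow a multiple of $\eta$. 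I would invoke this from the literature, noting that the quasimodular correction $\tfrac1{12}(24n-1)p(n)$ is precisely what the raising operator attaches to the weight $-\tfrac12$ form $1/\eta$ (through the identity $\spt(n)=np(n)-\tfrac12 N_2(n)$ linking $\spt$ to the second rank moment), so that $g(\tau/24)$ agrees with $\mathcal{G}_P^+$ up to an explicit constant and a genuine holomorphic form.

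Next I would evaluate the arithmetic sum for this lattice, mirroring the computation behind Theorem~\ref{Thm: mertens}. There one checks that for a vector $X$ with $q(X)=m$ the difference $\sqrt{\lvert q(X_{w^\perp})\rvert}-\sqrt{\lvert q(X_w)\rvert}$ collapses to the negative of the smaller of two linear forms in the coordinates of $X$; after reparametrising the relevant vectors of $L'$ by pairs $r>s>0$ this makes the factor $(\sqrt{\lvert q(X_{w^\perp})\rvert}-\sqrt{\lvert q(X_w)\rvert})^{1+2j}$ a multiple of $s^{2j+1}$, turns the norm condition $q(X)=m$ into the exponent $rs/6$, and produces the Kronecker symbol $\leg{12}{r^2-s^2}$ from the residues in $L'/L$. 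This identifies the arithmetic sum with a constant multiple of $G_{2j+2}(\tau)$. Combining this constant, the normalisation relating $\left[\mathcal{G}_P^+,\Theta_{N^-}\right]_j$ to $\left[g(\tau/24),\eta\right]_j$ (which carries the characteristic binomial factors of a weight $(\tfrac32,\tfrac12)$ Rankin--Cohen bracket together with the $24^{\pm j}$ coming from the chain rule for $\tau\mapsto\tau/24$), and the constant $C_j$ of Theorem~\ref{Thm: MAIN}, yields the stated coefficient $24^j\binom{2j}{j}^{-1}$. Finally, I would descend from the vector-valued statement to a scalar one: a holomorphic vector-valued modular form for $\rho_L$ of signature $(1,1)$ corresponds, under the standard dictionary, to scalar forms on a congruence subgroup, and here the particular combination matching $G_{2j+2}$ and $\left[g(\tau/24),\eta\right]_j$ descends all the way to $\SL_2(\Z)$. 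Holomorphicity, and the cusp form property for $j>0$, are inherited directly from Theorem~\ref{Thm: MAIN}.

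The step I expect to be the main obstacle is the explicit lattice-sum computation: arranging the special point $w$ and the parametrisation of $L'$ so that the square-root difference really does collapse to $\pm s^{2j+1}$ with the correct power $q^{rs/6}$, and so that the Kronecker symbol $\leg{12}{r^2-s^2}$ emerges cleanly from the discriminant form. The accompanying bookkeeping of constants and of the $\tau\mapsto\tau/24$ rescaling needed to recover $24^j\binom{2j}{j}^{-1}$ is delicate but routine, while the harmonic Maass form property of the $\spt$-series, though indispensable, can be cited rather than reproved.
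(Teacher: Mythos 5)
Your overall strategy --- compute the higher Siegel lift on a signature $(1,1)$ lattice in two ways, subtract, and apply Serre duality --- is the same as the paper's, and your identification of the arithmetic side with $G_{2j+2}$ and of the bracket side with $\left[g(\tau/24),\eta(\tau)\right]_j$ is the right target. But there is a genuine gap in the setup: you propose to choose a single lattice $L$ and a single special point $w$ so that ``$\Theta_{N^-}$ becomes $\eta$'' and ``$\mathcal{G}_P^+$ has shadow the $\eta$-theta $\Theta_P$''. This cannot work. At a special point the split theta functions $\Theta_P$ and $\Theta_{N^-}$ are the \emph{full} unary theta functions $\theta_{\frac{1}{2},d_P}$ and $\theta_{\frac{1}{2},d_N}$, whose components are unsigned sums $\sum_{n\equiv r\pmod{2d}}q^{n^2/4d}$. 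The function $\eta(\tau)=\sum_{n\geq 1}\leg{12}{n}q^{n^2/24}$ carries the sign character $\leg{12}{\cdot}$ and is \emph{not} a single unary theta function; it is the combination $\theta_{\frac{1}{2},6}-\theta_{\frac{1}{2},6}^{\sigma_2}$ involving the Atkin--Lehner involution $\sigma_2$ on $\Z/12\Z$. (Relatedly, the relevant rank-one lattice is $(\Z,6x^2)$ with discriminant group $\Z/12\Z$ of order $12$, not $24$.) The paper's mechanism for producing this signed combination is precisely what your proposal omits: one takes $L=(\Z,6x^2)\oplus(\Z,-6y^2)$ inside $(\R^2,xy)$, evaluates the lift at the \emph{two} special points $y=(1,6)$ and $y=(2,3)$ (both with $d_P=d_N=6$ but related by $\sigma_2$), and takes the \emph{difference} of the two evaluations. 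Without this two-point difference the Kronecker symbol $\leg{12}{r^2-s^2}$ in $G_{2j+2}$ and the character in $\eta$ have no source.

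The same issue affects the input form: the weight $\frac{3}{2}$ harmonic Maass form $F$ with $\xi_{3/2}F=(-\sqrt{6}/4)\eta$ must first be promoted to the vector-valued form $\sum_{r\pmod{12}}\leg{12}{r}F(\tau)\mathfrak{e}_r$ for the Weil representation of $(\Z,6x^2)$, whose shadow is $-\frac{\sqrt 6}{4}\bigl(\theta_{\frac{1}{2},6}-\theta_{\frac{1}{2},6}^{\sigma_2}\bigr)$; it is this twisted vector-valued object, not a bare ``$\mathcal{G}_P^+$ for the lattice'', that enters the Rankin--Cohen bracket. Your heuristic that the correction term $\frac{1}{12}(24n-1)p(n)$ comes from raising $1/\eta$ is a side remark and can be replaced, as you say, by citing Bringmann's result. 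Once the two-special-point difference and the $\leg{12}{\cdot}$-twist are put in place, the rest of your outline (collapse of $\sqrt{\lvert q(X_{w^\perp})\rvert}-\sqrt{\lvert q(X_w)\rvert}$ to a minimum, reparametrisation by $r>s>0$, bookkeeping of the $24^{j}\binom{2j}{j}^{-1}$ constant via the duplication formula and the $\tau\mapsto\tau/24$ rescaling, and descent to $\SL_2(\Z)$) matches the paper's sketch.
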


To end the introduction, we remark as a cute application that by choosing $n=j=1$ in Theorem \ref{Thm: MAIN} one obtains a re-proof in terms of higher Siegel lifts of the classical relation
\begin{align*}
\sum_{\substack{n \in \Z }} \left(t-n^2\right) H\left(4t-n^{2}\right) =  \sum_{\substack{a,b \in \N \\ab =t}} \min\left(a,b \right)^{3}
\end{align*}
that may also be concluded by Theorem \ref{Thm: mertens}. One may obtain proofs of other similar relations of Hurwitz class numbers.

\subsection*{Outline}
We begin in Section \ref{Section: prelims} by recalling preliminary results needed for the rest of the paper. In Section \ref{Section: the lift} we prove the main results of the paper regarding the higher Siegel lift.  Finally, Section \ref{Sec: mertens} is devoted to proving Theorem \ref{Thm: MAIN} and giving novel proofs of Theorems \ref{Thm: mertens} and \ref{Thm: AK}.

\subsection*{Acknowledgments}
The author thanks Markus Schwagenscheidt for many insightful conversations on the contents of the paper, in particular suggesting the connection to Theorems \ref{Thm: mertens} and \ref{Thm: AK}, as well as useful comments on previous versions of the paper. The author would also like to thank Jan Bruinier and Andreas Mono for helpful comments on an earlier draft of the paper. The research conducted for this paper is supported by the Pacific Institute for the Mathematical Sciences (PIMS). The research and findings may not reflect those of the Institute.

\section{Preliminaries}\label{Section: prelims}
We collect some preliminary results needed for the sequel.
\subsection{The Weil representation}\label{Section: Weil representation}
The metaplectic extension of $\SL_2(\Z)$ is defined as
\begin{equation*}
\widetilde{\Gamma} \coloneqq \text{Mp}_2(\Z) \coloneqq \left\{ (\gamma, \phi) \colon \gamma = \left(\begin{matrix}
a & b \\ c & d
\end{matrix}\right)\in \SL_2(\Z), \phi\colon \H \rightarrow \C \text{ holomorphic}, \phi^2(\tau) = c\tau+d  \right\},
\end{equation*}
with generators $\widetilde{T} := \left(\left( \begin{smallmatrix} 1 & 1 \\ 0 & 1 \end{smallmatrix} \right),1\right)$ and $\widetilde{S} := \left(\left( \begin{smallmatrix} 0 & -1 \\ 1 & 0 \end{smallmatrix}\right) ,\sqrt{\tau}\right)$. Moreover, we let $\widetilde{\Gamma}_\infty$ be the subgroup generated by
$\widetilde{T}$.

Let $L$ be an even lattice of signature $(r,s)$ with quadratic form $q$ and associated bilinear form $(\cdot,\cdot)$. We let $L'$ denote its dual lattice, which has a group ring of $L'/L$ denoted by $\C[L'/L]$ with standard basis elements $\e_{\mu}$ for $\mu \in L'/L$. Furthermore, there is a natural bilinear form  $\langle\cdot,\cdot\rangle$ on the group ring given by $\langle\e_{\mu},\e_{\nu}\rangle = \delta_{\mu,\nu}$. Then the Weil representation $\rho_L$ associated with $L$ is the representation of $\widetilde{\Gamma}$ on $\C[L'/L]$ defined by
\begin{align*}
\rho_L\left(\widetilde{T}\right)(\e_\mu) \coloneqq e(q(\mu)) \e_\mu, \qquad
\rho_L\left(\widetilde{S}\right)(\e_\mu) \coloneqq \frac{e\left(\frac18(s-r)\right)}{\sqrt{|L'/L|}} \sum_{\nu \in L'/L} e(-(\nu,\mu)) \e_{\nu}.
\end{align*}
Here and throughout $e(x) \coloneqq e^{2\pi i x}$. The Weil representation $\rho_{L^-}$ associated to the lattice $L^- = (L,-q)$ is called the dual Weil representation of $L$. 


\subsection{Harmonic Maass forms}\label{Sec: harmonic M forms}
Let $\kappa \in \frac{1}{2} \Z$ and define the usual slash-operator by
\[
f\mid_{\kappa,\rho_{L}}(\gamma,\phi) (\tau) \coloneqq \phi(\tau)^{-2\kappa}\rho_{L}^{-1}(\gamma,\phi)f(\gamma\tau),
\]
for a function $f\colon \H \rightarrow \C[L'/L]$ and $(\gamma,\phi) \in \widetilde{\Gamma}$. A harmonic Maass form of weight $\kappa$ with respect to $\rho_L$ is any smooth function $f \colon \H \rightarrow \C[L'/L]$ that satisfies the following three conditions \cite{bruinierfunke2004}.
\begin{enumerate}[wide, labelwidth=!, labelindent=0pt]
	\item  It vanishes under the action of the weight $\kappa$ Laplace operator
	\[
	\Delta_{\kappa} \coloneqq -v^2\left(\frac{\partial^2}{\partial u^2} + \frac{\partial^2}{\partial v^2} \right) + i\kappa v\left(\frac{\partial}{\partial u} + i\frac{\partial}{\partial v} \right),
	\]
	\item It is invariant under the slash-operator $\mid_{\kappa,\rho_L}$,
	\item There exists a $\C[L'/L]$-valued Fourier polynomial (the principal part of $f$)
	\begin{equation*}
	P_f(\tau) \coloneqq \sum_{\mu \in L'/L} \sum_{n \leq 0} c_f^+(\mu,n) e(n\tau) \e_\mu 
	\end{equation*}
	such that $f(\tau) - P_f(\tau) = O(e^{-\varepsilon v})$ as $v \rightarrow \infty$ for some $\varepsilon > 0$.
\end{enumerate}
 The vector space of harmonic Maass forms of weight $\kappa$ with respect to $\rho_L$ is denoted by $H_{\kappa,L}$, and $M_{\kappa,L}^!$ is the subspace of weakly holomorphic modular forms. It is a classical fact that any $f \in H_{\kappa,L}$ can be decomposed as $f = f^+ + f^-$ where $f^+$ is holomorphic and $f^-$ is non-holomorphic. These have Fourier expansions of the form
\begin{align*}
f^+(\tau) &= \sum_{\mu \in L'/L} \sum_{n \gg -\infty} c_f^+(\mu,n) e(n\tau) \e_\mu, \quad
f^-(\tau) = \sum_{\mu \in L'/L} \sum_{n < 0} c_f^-(\mu,n)\Gamma(1-\kappa, 4\pi |n| v) e(n\tau) \e_\mu,
\end{align*}
where $\Gamma(s,x): = \int_x^\infty t^{s-1}e^{-t} dt$ denotes the incomplete Gamma function.

The antilinear differential operator $\xi_\kappa = 2iv^\kappa \overline{\frac{\partial}{\partial {\overline{\tau}}}}$ from the introduction maps a harmonic Maass form $f \in H_{\kappa,L}$ to a cusp form of weight  $2-\kappa$ for $\rho_{L^-}$. In addition to the Maass raising operator in the introduction, we also require the Maass lowering operator $L_\kappa \coloneqq -2iv^2\frac{\partial}{\partial \overline{\tau}}$.

\subsection{Maass--Poincar\'{e} series}\label{Section: Maass--Poincare series}
Let $\kappa \in \frac{1}{2} \Z$ with $\kappa < 0$. We let $M_{\mu, \nu}$ be the usual $M$-Whittaker function - see \cite[equation~13.1.32]{abramowitz1988handbook}, for example - and for $s \in \C$ and $y \in \R \backslash \{0\}$, define
\begin{align}\label{curlyM}
\mathcal{M}_{\kappa,s} (y) \coloneqq |y|^{-\frac{\kappa}{2}} M_{\sgn(y) \frac{\kappa}{2}, s-\frac{1}{2}} (|y|).
\end{align}
For $\mu \in L'/L$ and $m \in \Z - q(\mu)$ with $m > 0$ we define the vector-valued Maass--Poincar\'{e} series \cite{bruinier2004borcherds}
\begin{align*}
F_{\mu, -m, \kappa,s}(\tau) \coloneqq \frac{1}{2 \Gamma(2s)} \sum_{(\gamma,\phi) \in \widetilde{\Gamma}_\infty \backslash \widetilde{\Gamma}} \left(\mathcal{M}_{\kappa,s} (-4 \pi m v) e(-mu) \e_\mu\right) \mid_{\kappa, \rho_L} (\gamma,\phi)(\tau).
\end{align*}
The series converges absolutely for $\re(s) > 1$, and at the point $s = 1-\frac{\kappa}{2}$, the function
\[
F_{\mu, -m, \kappa}(\tau) \coloneqq F_{\mu,-m,\kappa,1-\frac{\kappa}{2}}(\tau)
\] 
defines a harmonic Maass form in $H_{\kappa,L}$ with principal part $e(-m\tau)(\e_{\mu}+\e_{-\mu})+\mathfrak{c}$ for some constant $\mathfrak{c} \in \C[L'/L]$. Let $H_{\kappa,L}^!$ be the space of harmonic Maass forms with at most linear exponential growth at $\infty$, and $H^{\text{cusp}}_{\kappa,L}$ be the subspace of $H_{\kappa,L}^!$ that maps to a cusp form under $\xi_\kappa$. Then it is a classical fact that for $\kappa <0$ any $f \in H_{\kappa,L}^{\text{cusp}}$ may be written as
\begin{align*}
f(\tau)= \frac{1}{2} \sum_{h \in L'/L} \sum_{m \geq 0} c_f^+(h,-m) F_{h,m,\kappa}(\tau) + \mathfrak{c},
\end{align*}
where $\mathfrak{c}$ is a constant in $\C[L'/L]$ which may be non-zero only if $\kappa=0$.

The following lemma is \cite[Lemma 2.1]{ANBMS}, and follows inductively from \cite[Proposition~3.4]{bruinier2020greens}.
\begin{lemma}\label{Lemma: R_k^n acting on F_m,h}
	For $n \in \N_0$ we have that
	\begin{equation*}
	R_\kappa^n \left(F_{\mu, -m, \kappa,s}\right)(\tau) = (4\pi m)^n \frac{\Gamma\left(s+n+\frac{\kappa}{2}\right)}{\Gamma\left(s+\frac{\kappa}{2}\right)} F_{\mu, -m,\kappa+2n,s}(\tau).
	\end{equation*}
\end{lemma}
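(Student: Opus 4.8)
The plan is to induct on $n$, reducing the statement at each step to a single application of $R_\kappa$ to the seed of the Poincaré series, where the real content is a contiguous relation for the $M$-Whittaker function.

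First I would record that $R_\kappa$ is equivariant with respect to the slash operator: since $\rho_L$ does not depend on $\tau$, a direct computation gives $R_\kappa\bigl(\varphi\mid_{\kappa,\rho_L}(\gamma,\phi)\bigr) = (R_\kappa\varphi)\mid_{\kappa+2,\rho_L}(\gamma,\phi)$ for every $(\gamma,\phi)\in\widetilde{\Gamma}$. Because the series defining $F_{\mu,-m,\kappa,s}$ converges absolutely and locally uniformly for $\re(s)>1$ (and so does the series obtained after applying $R_\kappa$), I may differentiate term by term. This reduces the computation of $R_\kappa(F_{\mu,-m,\kappa,s})$ to applying $R_\kappa$ to the single seed $\mathcal{M}_{\kappa,s}(-4\pi m v)\,e(-mu)\,\e_\mu$; note that the normalizing factor $\tfrac{1}{2\Gamma(2s)}$ is the same in weights $\kappa$ and $\kappa+2$, as $s$ is unchanged.

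Next I would prove the single-step identity
\begin{equation*}
R_\kappa\bigl(\mathcal{M}_{\kappa,s}(-4\pi m v)\,e(-mu)\bigr) = 4\pi m\Bigl(s+\tfrac{\kappa}{2}\Bigr)\,\mathcal{M}_{\kappa+2,s}(-4\pi m v)\,e(-mu),
\end{equation*}
which gives $R_\kappa(F_{\mu,-m,\kappa,s}) = 4\pi m\,(s+\tfrac{\kappa}{2})\,F_{\mu,-m,\kappa+2,s}$ and is exactly the single-step case recorded in \cite[Proposition~3.4]{bruinier2020greens}. Writing $R_\kappa = 2i\partial_\tau + \kappa/v$ and using $\partial_u e(-mu) = -2\pi i m\, e(-mu)$, the $u$-derivative contributes the factor $2\pi m$ while the $v$-derivative acts on the Whittaker factor. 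Substituting $y = 4\pi m v$ and recalling that $\mathcal{M}_{\kappa,s}(-4\pi m v) = (4\pi m v)^{-\kappa/2}M_{-\kappa/2,\,s-1/2}(4\pi m v)$ since $\sgn(-4\pi m v) = -1$, the identity collapses to the contiguous relation
\begin{equation*}
M_{a,b}'(y) + \Bigl(\tfrac12 - \tfrac{a}{y}\Bigr)M_{a,b}(y) = \Bigl(b - a + \tfrac12\Bigr)\tfrac{1}{y}\,M_{a-1,b}(y), \qquad a = -\tfrac{\kappa}{2},\ b = s-\tfrac12,
\end{equation*}
whose right-hand constant $b - a + \tfrac12$ equals $s+\tfrac{\kappa}{2}$.

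Finally I would run the induction. Assuming the formula for $n$ and applying $R_{\kappa+2n}$ via the single-step identity with $\kappa$ replaced by $\kappa+2n$, the new factor is $4\pi m\,(s+n+\tfrac{\kappa}{2})$; since $(s+n+\tfrac{\kappa}{2})\,\Gamma(s+n+\tfrac{\kappa}{2}) = \Gamma(s+n+1+\tfrac{\kappa}{2})$, the Gamma quotient telescopes and the power of $4\pi m$ advances to $n+1$, yielding the claim. The main obstacle is the bookkeeping in the seed computation: correctly tracking the Whittaker parameter shift $a\mapsto a-1$ forced by $\kappa\mapsto\kappa+2$, the sign coming from $\sgn(-4\pi m v)=-1$, and the resulting constant $s+\tfrac{\kappa}{2}$. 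Once the contiguous relation is in hand, the remaining steps are formal.
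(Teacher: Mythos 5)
Your proposal is correct and follows essentially the same route as the paper, which simply cites the single-step raising identity $R_\kappa(F_{\mu,-m,\kappa,s}) = 4\pi m\left(s+\tfrac{\kappa}{2}\right)F_{\mu,-m,\kappa+2,s}$ from \cite[Proposition~3.4]{bruinier2020greens} and iterates it; your seed computation, the contiguous relation with constant $\mu-\kappa'+\tfrac12 = s+\tfrac{\kappa}{2}$, and the telescoping Gamma quotient all check out. The only difference is that you supply the details of the cited one-step identity rather than quoting it.
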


\subsection{Operators on vector-valued modular forms}\label{Section: representations and lattices} Let $L$ be an even lattice. Then the space of $\C[L'/L]$-valued smooth modular forms of weight $\kappa$ with respect to the representation $\rho_L$ is denoted by $A_{\kappa,L}$. 

Let $K \subset L$ be a sublattice of finite index. Since $K \subset L \subset L' \subset K'$ we have $L/K \subset L'/K \subset K'/K$, and therefore obtain a map $L'/K \rightarrow L'/L, \mu \mapsto \bar{\mu}$. For $\mu \in K'/K$ and $f \in A_{\kappa, L}$,  and $g \in A_{\kappa,K}$, we define
\begin{equation*}
(f_K)_\mu := \begin{dcases}
f_{\bar{\mu}}& \text{ if } \mu \in L'/K, \\
0& \text{ if } \mu \not\in L'/K,
\end{dcases}\qquad 
\left(g^L\right)_{\bar{\mu}} = \sum_{\alpha \in L/K} g_{\alpha + \mu},
\end{equation*}
where $\mu$ is a fixed preimage of $\bar{\mu}$ in $L'/K$. The following lemma may be found in \cite[Section~3]{bruinierFaltings}.

\begin{lemma}\label{Lemma: restriction and trace maps}
	There are two natural maps
	\begin{equation*}
	\textup{res}_{L/K} \colon A_{\kappa, L} \rightarrow A_{\kappa, K}, \hspace{10pt} f \mapsto f_K,
	\qquad\qquad
	\tr_{L/K} \colon A_{\kappa, K} \rightarrow A_{\kappa, L}, \hspace{10pt} g \mapsto g^L,
	\end{equation*}
	such that for any $f \in A_{\kappa,L}$ and $g \in A_{\kappa,K}$, we have $
	\langle f, \overline{g}^L \rangle = \langle f_K, \overline{g} \rangle$.
\end{lemma}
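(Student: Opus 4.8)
The plan is to separate the two assertions of the lemma---that $\mathrm{res}_{L/K}$ and $\tr_{L/K}$ land in the claimed spaces, and that they are adjoint for the bilinear pairings---and to reduce the first to a computation on the generators of $\mathrm{Mp}_2(\Z)$ and the second to a reindexing of a finite sum. Both maps are visibly $\C$-linear with $\tau$-independent coefficients, so they preserve smoothness (and commute with any pointwise differential conditions), and neither alters the automorphy factor $\phi^{2\kappa}$; moreover, since $[L:K]<\infty$ forces $K\otimes\R=L\otimes\R$, the lattices $L$ and $K$ share the signature $(r,s)$ and hence the same constant $e(\tfrac18(s-r))$ in $\rho_L(\widetilde S)$ and $\rho_K(\widetilde S)$. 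Thus the only real content in the well-definedness is $\rho$-equivariance, which it suffices to check on $\widetilde T$ and $\widetilde S$. The one arithmetic fact I would record first is the index relation: from $K\subseteq L\subseteq L'\subseteq K'$ together with $[K':L']=[L:K]$ (dualizing reverses inclusions and preserves indices) one obtains $[K':K]=[L:K]^2[L':L]$, equivalently $\sqrt{|K'/K|}=[L:K]\sqrt{|L'/L|}$.

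First I would dispatch $\widetilde T$. Writing $(g^L)_{\bar\mu}=\sum_{\alpha\in L/K}g_{\mu+\alpha}$ and using $q(\mu+\alpha)=q(\mu)+(\mu,\alpha)+q(\alpha)\equiv q(\mu)\pmod\Z$---valid since $\mu\in L'$, $\alpha\in L$, and $L$ is even---shows that $g^L$ transforms by the phase $e(q(\bar\mu))$ demanded by $\rho_L(\widetilde T)$; the analogous and simpler statement holds for $f_K$. The substantive step is $\widetilde S$. For the trace map I would expand $(g^L)_{\bar\mu}(\widetilde S\tau)$ via the $\rho_K(\widetilde S)$-transformation of $g$, swap the order of summation, and evaluate the inner character sum $\sum_{\alpha\in L/K}e(-(\alpha,\lambda))$ by orthogonality: it equals $[L:K]$ if $\lambda\in L'/K$ and $0$ otherwise. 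The vanishing collapses the outer sum to $\lambda\in L'/K$, the index relation converts the prefactor $[L:K]/\sqrt{|K'/K|}$ into $1/\sqrt{|L'/L|}$, and the congruence $(\mu,\lambda)\equiv(\bar\mu,\bar\lambda)\pmod\Z$ reorganizes the remaining sum into $(\rho_L(\widetilde S)g^L)_{\bar\mu}$, as desired. The restriction map is the mirror image: the same orthogonality relation forces $(f_K)_\nu(\widetilde S\tau)=0$ for $\nu\notin L'/K$ (matching the definition $(f_K)_\nu=0$) and recovers $(\rho_L(\widetilde S)f)_{\bar\nu}$ for $\nu\in L'/K$.

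For the adjointness identity I would simply unwind both sides. As conjugation is componentwise, $\overline{g}^L=\overline{g^L}$, so the left-hand side is $\sum_{\bar\mu\in L'/L}f_{\bar\mu}\sum_{\alpha\in L/K}\overline{g_{\mu+\alpha}}$, whereas the definition of $f_K$ restricts the defining sum over $K'/K$ to $L'/K$ and yields $\langle f_K,\overline{g}\rangle=\sum_{\nu\in L'/K}f_{\bar\nu}\,\overline{g_\nu}$. These coincide because the projection $L'/K\twoheadrightarrow L'/L$ has fibers $\{\mu+\alpha:\alpha\in L/K\}$ along which $f_{\bar\nu}$ is constant; reindexing the sum over $L'/K$ by base and fiber passes from one expression to the other.

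The only genuinely delicate point is the $\widetilde S$-equivariance, where the Gauss-sum normalization, the orthogonality relation, and the index relation must be made to conspire---specifically, the factor $[L:K]$ produced by orthogonality has to cancel exactly against the ratio $\sqrt{|K'/K|}/\sqrt{|L'/L|}$. Once that interplay is pinned down, everything else is routine bookkeeping.
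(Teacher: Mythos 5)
Your proof is correct. The paper itself does not prove this lemma but only cites \cite{bruinierFaltings}, and your argument (reduction to the generators $\widetilde T$ and $\widetilde S$, character orthogonality on $L/K$, the index relation $|K'/K|=[L:K]^2|L'/L|$ to cancel the Gauss-sum normalizations, and fiberwise reindexing of $L'/K\to L'/L$ for the adjointness) is exactly the standard verification found there.
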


\subsection{Rankin--Cohen brackets}
Let $K$ and $L$ be even lattices. For $n\in\N_0$ and functions $f \in A_{\kappa,K}$ and $g \in A_{\ell,L}$ with $\kappa,\ell \in \frac{1}{2} \Z$ the $n$-th Rankin--Cohen bracket is defined by
\begin{align*}\label{Definition: RC brackets}
[f,g]_n \coloneqq \frac{1}{(2\pi i)^n}\sum_{\substack{r,s \geq 0\\ r+s=n}} (-1)^r \frac{\Gamma(\kappa+n) \Gamma(\ell+n)}{\Gamma(s+1) \Gamma(\kappa+n-s) \Gamma(r+1) \Gamma(\ell+n-r)}  f^{(r)} \otimes g^{(s)},
\end{align*}
where $f^{(r)} = \frac{\partial^r}{\partial \tau^r} f$. For two vector-valued functions $f = \sum_{\mu}f_\mu \e_\mu \in A_{\kappa,K}$ and $g = \sum_{\nu}g_\nu \e_\nu \in A_{\ell,L}$ their tensor product is defined as 
\[
f \otimes g \coloneqq \sum_{\mu , \nu} f_\mu g_\nu \e_{\mu + \nu} \in A_{\kappa+\ell,K \oplus L}.
\]
The following proposition is the vector-valued version of \cite[Proposition~3.6]{bruinier2020greens}.

\begin{proposition}\label{Proposition: lowering of RC brackets}
	Let $f \in H_{\kappa,K}$ and $g \in H_{\ell,L}$ be harmonic Maass forms. For $n \in \N_0$ we have
	\begin{align*}
	(-4\pi)^n L_{\kappa+\ell+2n}\left([f,g]_n \right)= \frac{\Gamma(\kappa+n) }{\Gamma(n+1) \Gamma(\kappa)} L_\kappa (f) \otimes R_\ell^n (g) + (-1)^n \frac{ \Gamma(\ell+n)}{  \Gamma(n+1) \Gamma(\ell)} R_\kappa^n (f) \otimes L_\ell (g).
	\end{align*}
\end{proposition}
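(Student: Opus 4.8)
The plan is to reduce immediately to the scalar-valued statement and then compute directly. Since the raising operator $R_\kappa$, the lowering operator $L_\kappa$, and the holomorphic and antiholomorphic derivatives all act componentwise on $\C[L'/L]$-valued functions, and the tensor product $f\otimes g = \sum_{\mu,\nu} f_\mu g_\nu \e_{\mu+\nu}$ is bilinear in the components, it suffices to prove the identity for each pair of scalar components $(f_\mu,g_\nu)$. I therefore drop the group-ring indices and treat $f$ of weight $\kappa$ and $g$ of weight $\ell$ as scalar-valued; the vector-valued statement is then the componentwise reassembly. I also record at the outset that $L_\kappa = -2iv^2\frac{\partial}{\partial\overline{\tau}}$ carries no explicit weight dependence, so that $L_{\kappa+\ell+2n}=-2iv^2\frac{\partial}{\partial\overline{\tau}}$ for the purposes of the computation.

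Next I would apply $-2iv^2\frac{\partial}{\partial\overline{\tau}}$ to $[f,g]_n=\frac{1}{(2\pi i)^n}\sum_{r+s=n}(-1)^r c_{r,s}\, f^{(r)} g^{(s)}$, with $c_{r,s}$ the displayed Gamma-quotient. Using the product rule together with the commutativity $\frac{\partial}{\partial\overline{\tau}}\frac{\partial^r}{\partial\tau^r}=\frac{\partial^r}{\partial\tau^r}\frac{\partial}{\partial\overline{\tau}}$ splits the output into an ``$f$-group'', in which $\frac{\partial}{\partial\overline{\tau}}$ lands on $f$, and a symmetric ``$g$-group''. The conceptual crux, and the reason the right-hand side involves only the first shadows $L_\kappa f$ and $L_\ell g$ rather than higher objects, is that harmonicity collapses all holomorphic derivatives of the shadow: since $\xi_\kappa f$ is holomorphic, one has $\frac{\partial f}{\partial\overline{\tau}}=\frac{-1}{2iv^2}L_\kappa f$ with $v^{-2}L_\kappa f = v^{-\kappa}\overline{\xi_\kappa f}$ and $\overline{\xi_\kappa f}$ antiholomorphic, so $\frac{\partial^r}{\partial\tau^r}(v^{-2}L_\kappa f)$ reduces to the elementary derivative $\frac{\partial^r}{\partial\tau^r}v^{-\kappa}$ times $\overline{\xi_\kappa f}$. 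Consequently every term of the $f$-group is a power of $v^{-1}$ times $L_\kappa f\cdot g^{(s)}$, and symmetrically every term of the $g$-group is a power of $v^{-1}$ times $f^{(r)}\cdot L_\ell g$.

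Finally I would reassemble. The standard identity
\[
R_\ell^n g = \sum_{s=0}^n \binom{n}{s}\frac{\Gamma(\ell+n)}{\Gamma(\ell+s)}\, v^{-(n-s)}(2i)^s g^{(s)},
\]
proved by induction from $R_\ell = 2i\frac{\partial}{\partial\tau}+\frac{\ell}{v}$, exhibits the terms $v^{-(n-s)}g^{(s)}$ of the $f$-group as precisely the constituents of $R_\ell^n g$, and dually for $R_\kappa^n f$ in the $g$-group. It then remains to check that, after inserting $c_{r,s}$ and using $s=n-r$ (whence $\Gamma(\kappa+n-s)=\Gamma(\kappa+r)$ and $\Gamma(\ell+n-r)=\Gamma(\ell+s)$), the coefficient of each $g^{(s)}$ in the $f$-group differs from its coefficient in $R_\ell^n g$ by the single $s$-independent factor $\left(-\tfrac{i}{2}\right)^n/\Gamma(n+1)$, and the coefficient of each $f^{(r)}$ in the $g$-group differs from its coefficient in $R_\kappa^n f$ by $\left(\tfrac{i}{2}\right)^n/\Gamma(n+1)$. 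The powers of $i$ then combine with $(2\pi i)^{-n}$ to produce exactly $(-4\pi)^{-n}$ together with the constants $\frac{\Gamma(\kappa+n)}{\Gamma(n+1)\Gamma(\kappa)}$ and $(-1)^n\frac{\Gamma(\ell+n)}{\Gamma(n+1)\Gamma(\ell)}$, the sign $(-1)^n$ being exactly the discrepancy between $\left(-\tfrac{i}{2}\right)^n$ and $\left(\tfrac{i}{2}\right)^n$. The main obstacle is purely this bookkeeping: confirming that the a priori $s$-dependent prefactors of the holomorphic derivatives collapse to an $s$-independent multiple of the raising-operator coefficients, the key cancellation being $i^{2s}=(-1)^s$. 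A cleaner alternative is induction on $n$, with base case $[f,g]_0=f\otimes g$ (where $L(f\otimes g)=L_\kappa f\otimes g + f\otimes L_\ell g$ is immediate) and an inductive step built from a Rankin--Cohen recursion together with the commutator relations among $L_\kappa$, $R_\kappa$, and $\frac{\partial}{\partial\tau}$.
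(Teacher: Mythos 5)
Your computation is correct: the componentwise reduction is legitimate (the operators and the tensor product all act componentwise, and harmonicity of each component is what makes $\overline{\xi_\kappa f_\mu}$ antiholomorphic), the identity $\partial_{\bar\tau}f^{(r)} = \partial_\tau^r\bigl(\tfrac{-1}{2i}v^{-\kappa}\overline{\xi_\kappa f}\bigr) = \tfrac{-1}{2i}(-1)^r\tfrac{\Gamma(\kappa+r)}{\Gamma(\kappa)}(2i)^{-r}v^{-\kappa-r}\overline{\xi_\kappa f}$ produces exactly the cancellation $\Gamma(\kappa+r)/\Gamma(\kappa+n-s)=1$ you need, and the ratio $(2i)^{-r}/(2i)^{s}=(2i)^{-n}$ is indeed $s$-independent and combines with $(2\pi i)^{-n}$ to give $(-4\pi)^{-n}$ (respectively $(-1)^n(-4\pi)^{-n}$ for the $g$-group). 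The paper itself gives no proof of this proposition -- it simply asserts it as the vector-valued version of \cite[Proposition~3.6]{bruinier2020greens} -- and your argument is essentially the direct verification underlying that cited result, so nothing further is needed.
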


\subsection{Theta functions}\label{Sec: theta functions}
Let $(K,q)$ be a positive-definite lattice of rank $n$. Then one may define the vector-valued theta function
\begin{equation*}
\Theta_K(\tau) \coloneqq \sum_{\mu \in K'/K} \sum_{X \in K +\mu} e(q(X) \tau) \e_\mu,
\end{equation*}
which is a holomorphic modular form of weight $\frac{n}{2}$ for the Weil representation $\rho_K$.

For an even lattice $L$ of signature $(1,n)$ the Siegel theta function is defined by
\begin{align}\label{Definition: Siegel theta}
\Theta_L(\tau,z) \coloneqq v^{\frac{n}{2}} \sum_{\mu \in L'/L} \sum_{X \in L+\mu} e\left( q(X_z) + q(X_{z^\perp}) \right) \mathfrak{e}_\mu,
\end{align}
where $X_z$ denotes the orthogonal projection of $X$ onto $z$. The Siegel theta function transforms in $\tau$ like a modular form of weight $\frac{1-n}{2}$ for  $\rho_L$ and is invariant in $z$ under the subgroup $\Gamma_L$ of the orthogonal group $\mathrm{O}(L)$ which fixes the classes of $L'/L$. If $K \subset L$ is a sublattice of finite index, Lemma~\ref{Lemma: restriction and trace maps} implies that
\begin{align}\label{Equation: trace map for Theta_L}
\Theta_L = (\Theta_K)^L.
\end{align}

As in \cite{BS}, we call $w \in \Gr(L)$ a special point if it is defined over $\Q$ (i.e. $w \in L \otimes \Q$). Then the orthogonal complement of $w$ in the non-degenerate space $V = L \otimes \R$ is defined over $\Q$ as well, and we denote it by $w^\perp$. We have the splitting $L \otimes \Q = w \oplus w^\perp$. Let $P = L \cap w$ and $N = L \cap w^\perp$ be the corresponding positive definite one-dimensional and negative definite $n$-dimensional sublattices of $L$, and note that $P \oplus N$ has finite index in $L$. A direct computation shows that the evaluation of the Siegel theta function $\Theta_L$ at $w$ splits as
\begin{align}\label{Equation: splitting}
\Theta_L(\tau,w) = \Theta_{P \oplus N}(\tau,w) = \Theta_P(\tau) \otimes v^{\frac{n}{2}} \overline{\Theta_{N^-}(\tau)}.
\end{align}

\subsection{Unary theta functions }\label{Sec: unary theta, atkin-lehner}
Let $d \in \N$. Then the lattice $\Z(d) = (\Z,dx^2)$ is one-dimensional positive-definite, has level $4d$, and its discriminant group is isomorphic to $\Z/2d\Z$ with the quadratic form $x \mapsto x^2/4d$. We define the unary theta series by
\begin{align*}
\theta_{\frac{1}{2}, d} (\tau) \coloneqq \sum_{r \pmod{2d}} \sum_{\substack{n \in \Z \\ n \equiv r \pmod{2d}}} q^{\frac{n^2}{4d}} \mathfrak{e}_r.
\end{align*}
It is a holomorphic modular form of weight $\frac{1}{2}$ for the Weil representation of $\Z(d)$.

Given an exact divisor $c \mid\mid d$, we can associate an Atkin-Lehner involution $\sigma_c$ on $\Z/2d\Z$, defined by the equations $\sigma_c(x)\equiv -x \pmod{2c}$ and $\sigma_c(x) \equiv x \pmod{\frac{2d}{c}}$. These involutions act on vector-valued modular forms for the Weil representation associated to $\Z(d)$ via the action
\begin{align*}
\left(\sum_{r\pmod{2d}} f_r(\tau) \mathfrak{e}_r\right)^{\sigma_c} = \sum_{r \pmod{2d}} f_{\sigma_c(r)}(\tau) \mathfrak{e}_r.
\end{align*}

\subsection{Serre duality}
Let $L$ be an even lattice, and let $k \in \frac{1}{2}\Z$. Then we have the following proposition (compare e.g. \cite[Proposition 2.5]{LiSch}).

\begin{proposition}[Serre duality]\label{Prop: Serre duality} Let 
	\[
	g(\tau) = \sum_{h \in L'/L}\sum_{n \geq 0}c_g(h,n)q^n\e_h
	\]
	be a holomorphic $q$-series which is bounded at $i\infty$. Then $g(\tau)$ is a holomorphic modular form of weight $k$ for the Weil representation $\rho_L$ if and only if
	\[
	\{g,f \}: =\sum_{h \in L'/L}\sum_{n \geq 0}c_g(h,n)c_f(h,-n) = 0
	\]
	for every weakly holomorphic modular form $f$ of weight $2-k$ for $\overline{\rho}_L$.
	\end{proposition}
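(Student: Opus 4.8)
The plan is to prove the two implications separately: the forward direction (modularity of $g$ implies the pairing vanishes) follows from the residue theorem on the compactified modular curve, while the converse is a duality statement that I would extract from the Maass--Poincar\'e series machinery recalled in Section~\ref{Section: Maass--Poincare series}.

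For the forward direction, suppose $g$ is a holomorphic modular form of weight $k$ for $\rho_L$ and let $f$ be any weakly holomorphic modular form of weight $2-k$ for $\overline{\rho}_L$. First I would check that the scalar-valued function $\langle g, f\rangle$ obtained from the bilinear pairing on $\C[L'/L]$ is a weakly holomorphic modular form of weight $2$ with trivial multiplier for $\SL_2(\Z)$: the automorphy factors $\phi(\tau)^{2k}$ and $\phi(\tau)^{2(2-k)}$ multiply to the single-valued $(c\tau+d)^2$ (so the metaplectic ambiguity cancels since the total weight is the integer $2$), while unitarity of the Weil representation gives $\langle \rho_L(\gamma,\phi)v, \overline{\rho}_L(\gamma,\phi)w\rangle = \langle v,w\rangle$, so the representation-theoretic factors cancel as well. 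Consequently $\omega := \langle g,f\rangle\, d\tau$ descends to a meromorphic differential on the compact Riemann surface $X := \overline{\SL_2(\Z)\backslash\H}$ that is holomorphic on $\H$ and whose only pole is at the cusp. Writing $q = e^{2\pi i\tau}$ so that $d\tau = \tfrac{1}{2\pi i}\tfrac{dq}{q}$, the residue of $\omega$ at the cusp equals $\tfrac{1}{2\pi i}$ times the constant term of $\langle g,f\rangle$, which is exactly $\tfrac{1}{2\pi i}\{g,f\}$. Since $\omega$ is holomorphic at the elliptic fixed points (both $g$ and $f$ are holomorphic on $\H$), these contribute no residue, and the residue theorem forces $\{g,f\} = 0$.

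For the converse, I would argue by duality. Assuming $\{g,f\} = 0$ for every such $f$, the crucial step is to identify precisely which tuples of non-positive Fourier coefficients $(c_f(h,-n))_{n\ge 0}$ occur as the principal-part-plus-constant-term data of a weakly holomorphic form of weight $2-k$ for $\overline{\rho}_L$. The plan is to build such forms from the analogue of the Maass--Poincar\'e series $F_{\mu,-m,\kappa}$ of weight $2-k$ for $\overline{\rho}_L$: an arbitrary prescribed principal part is realised by a suitable finite linear combination $f_P \in H^!_{2-k}$, whose image $\xi_{2-k}(f_P)$ is a cusp form of weight $k$ for $\rho_L$. Using the pairing between the coefficients of a harmonic Maass form and those of its shadow, $f_P$ can be chosen weakly holomorphic (shadow zero) precisely when the prescribed principal part is orthogonal, under the coefficient pairing, to every cusp form of weight $k$ for $\rho_L$; taking the constant terms into account upgrades this to orthogonality against all holomorphic modular forms of weight $k$ for $\rho_L$. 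Thus the space of realisable principal-part-plus-constant-term data is exactly the annihilator of the (finite-dimensional) space of holomorphic modular forms.

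Granting this identification, the hypothesis says that the coefficient sequence of $g$, viewed as a linear functional on principal-part data, annihilates the annihilator of the holomorphic modular forms; by finite-dimensionality and non-degeneracy of the coefficient pairing on the relevant truncations, biduality then forces the coefficients of $g$ to agree, degree by degree, with those of a genuine holomorphic modular form, and hence $g$ is modular. The hard part will be this converse: the residue-theorem half is essentially formal once the weight-$2$ transformation is verified, whereas the converse rests on the nontrivial existence statement that every principal part orthogonal to cusp forms is realised by a weakly holomorphic form, i.e.\ the Borcherds-type duality that the Poincar\'e-series construction is designed to supply. I expect the subtle points to be the orbifold structure of $X$ in the residue computation and the weight range (here $2-k<0$) in which the Poincar\'e series converge and exhaust the space $H^{\mathrm{cusp}}_{2-k}$, outside of which one would instead cite the general duality result.
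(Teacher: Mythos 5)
The paper offers no proof of this proposition at all --- it is quoted from the literature via the pointer to \cite{LiSch}, which in turn rests on Borcherds' Serre duality for vector-valued modular forms --- so there is no in-paper argument to compare against. Judged on its own, your outline is the standard proof of this duality and is essentially correct. The forward direction is complete modulo routine checks: $\langle g,f\rangle$ is a scalar-valued weakly holomorphic form of weight $2$ on $\SL_2(\Z)$ (the metaplectic and representation factors cancel exactly as you say, by unitarity of $\rho_L$ with respect to the standard Hermitian product), and the vanishing of its constant term is most safely obtained by integrating $\langle g,f\rangle\,d\tau$ over the boundary of the truncated fundamental domain $\mathcal{F}_T$ and letting $T\to\infty$, which sidesteps the orbifold issues you flag. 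For the converse, your reduction is the right one, and the biduality step is sound precisely because the space of holomorphic modular forms of weight $k$ for $\rho_L$ is finite-dimensional, hence equal to the double annihilator of itself under the coefficient pairing.

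The one place where your sketch is thinner than you acknowledge is the identification of the realizable principal-part-plus-constant-term data with the full annihilator of the holomorphic forms, not merely of the cusp forms. The Poincar\'e-series/Bruinier--Funke argument does show that a principal part orthogonal to all cusp forms of weight $k$ for $\rho_L$ is realized by some weakly holomorphic $f$ of weight $2-k$ for $\overline{\rho}_L$ (its shadow has vanishing Petersson product against every cusp form, hence vanishes); but the constant term of that $f$ is then \emph{forced}, and one must still verify that these forced constant terms sweep out exactly the annihilator of the Eisenstein part. This is where the genuine Serre duality or dimension count lives, and the phrase ``taking the constant terms into account upgrades this'' does not by itself discharge it. In the regime actually used in the paper one has $k=2j+2\ge 4$, so $2-k<0$: the Poincar\'e series converge, there are no nonzero holomorphic forms of weight $2-k$, and the bookkeeping closes up; citing Borcherds' obstruction theorem or \cite{LiSch} for this step, as you propose, is the legitimate way to finish.
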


\section{The higher Siegel theta lift}\label{Section: the lift}
In this section we compute the lift $\Lambda_j^{\reg}(f,z)$ for certain input functions $f$ in two different ways (as well as offering a pathway as to how one could compute its Fourier expansion).

\subsection{A series representation}

We first obtain an expression for $\Lambda^{\reg}_j \left(f, z\right)$ as a series whenever $f \in H^{\text{cusp}}_{k-2j,L}$. Recall that $k=\frac{1-n}{2}$.
\begin{theorem}\label{Theorem: lift as series}
	Assume that $q(X_z) \neq 0$ for every $X \in L+\mu$ with $q(X) = -m$. 	Let $f \in H^{\text{cusp}}_{k-2j,L}$. For every $j \in \N$, we have
	\begin{multline*}
	\Lambda^{\reg}_j (f,z) = (4\pi)^{\frac{1}{2}+j} \frac{ \Gamma(1+j)\Gamma(\frac{n}{2}+j)}{\Gamma\left(\frac{n+3}{2}+2j\right)}  \sum_{\substack{ X \in L'\\ q(X)<0}} c_f^+(X,q(X))  q(X)^{\frac{n+1}{2}+2j}  \left(- q(X_{z^\perp})\right)^{-\frac{n}{2}-j} \notag \\  \times {}_2F_1\left(\frac{n}{2}+j, 1+j;\frac{n+3}{2}+2j;\frac{q(X)}{q(X_{z^\perp})}\right).
	\end{multline*}
	where ${}_2F_1$ denotes the classical Gauss hypergeometric function. The series on the right-hand side converges absolutely.
\end{theorem}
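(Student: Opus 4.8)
The plan is to reduce everything to a single Maass--Poincar\'e series and then unfold. Since $k-2j<0$ and $f\in H^{\text{cusp}}_{k-2j,L}$, I would first invoke the expansion from Section \ref{Section: Maass--Poincare series} to write $f$ as a linear combination of the series $F_{h,m,k-2j}$, so that by linearity of the lift it suffices to compute $\Lambda_j^{\reg}(F_{h,m,k-2j},z)$. Applying Lemma \ref{Lemma: R_k^n acting on F_m,h} with $\kappa=k-2j$, $n=j$, and $s=1-\frac{k-2j}{2}=1-\frac{k}{2}+j$ turns the iterated raising operator $R_{k-2j}^{j}$ into a scalar multiple of the weight-$k$ Poincar\'e series $F_{h,m,k,s}$ evaluated at the \emph{shifted} spectral parameter $s=1-\frac{k}{2}+j$, which is no longer the harmonic point for weight $k$. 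This reduces the whole computation to integrating $F_{h,m,k,s}$ against $\overline{\Theta_L}$.

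Next I would unfold in the usual Rankin--Selberg fashion. Writing $F_{h,m,k,s}$ as a sum over $\widetilde{\Gamma}_\infty\backslash\widetilde{\Gamma}$ of its seed $\mathcal{M}_{k,s}(-4\pi m v)\,e(-mu)\,\e_h$, and using that $\Theta_L(\tau,z)$ transforms with weight $k=\frac{1-n}{2}$ together with the invariance of $v^k\,d\mu$, the regularised integral over $\mathcal{F}$ collapses to an integral over the strip $\widetilde{\Gamma}_\infty\backslash\H$. The $u$-integral $\int_0^1 e(-mu)\,\overline{\Theta_L}\,du$ then isolates exactly the vectors $X\in L+h$ with $q(X)=-m$; since $q(X)=q(X_z)+q(X_{z^\perp})$, the surviving $v$-dependence of $\overline{\Theta_L}$ is $v^{\frac{n}{2}}e^{-2\pi(q(X_z)-q(X_{z^\perp}))v}$, and the principal-part coefficient $c_f^+(X,q(X))=c_f^+(X,-m)$ emerges.

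The analytic heart is the resulting $v$-integral, which after pulling out the constant $(4\pi m)^{-k/2}$ from the definition of $\mathcal{M}_{k,s}$ takes the shape
\[
\int_0^\infty v^{\frac{n}{2}+\frac{k}{2}-2}\,M_{-\frac{k}{2},\, s-\frac12}(4\pi m v)\,e^{-2\pi\left(q(X_z)-q(X_{z^\perp})\right)v}\,dv,
\]
a Laplace transform of an $M$-Whittaker function. I would evaluate it with the classical formula (e.g.\@ Gradshteyn--Ryzhik 7.621) expressing such transforms through a Gauss hypergeometric function. A direct check of the three parameters, using $s=1-\frac{k}{2}+j$ and $k=\frac{1-n}{2}$, produces precisely $\frac{n}{2}+j$, $1+j$, and $\frac{n+3}{2}+2j$; the $k/2$-terms cancel in all three. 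Substituting $q(X)=-m$ and $q(X_z)-q(X_{z^\perp})=2q(X_z)-q(X)$ simplifies the hypergeometric argument $\frac{4\pi m}{2\pi(q(X_z)-q(X_{z^\perp}))+2\pi m}$ to exactly $\frac{q(X)}{q(X_{z^\perp})}$, and the Laplace parameter collects into the power $\left(-q(X_{z^\perp})\right)^{-\frac{n}{2}-j}$.

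Finally I would gather all constants --- those from the Poincar\'e expansion of $f$ (the overall $\frac12$ and the $\e_{\pm h}$ pairing), the normalising factor $\frac{1}{2\Gamma(2s)}$ in the definition of the series, and the Gamma quotient supplied by Lemma \ref{Lemma: R_k^n acting on F_m,h} --- and reassemble them into the stated prefactor $(4\pi)^{\frac12+j}\frac{\Gamma(1+j)\Gamma(\frac{n}{2}+j)}{\Gamma(\frac{n+3}{2}+2j)}$ with the power $q(X)^{\frac{n+1}{2}+2j}$. Absolute convergence of the resulting series follows from the polynomial growth of the coefficients $c_f^+$ against the exponential decay built into the theta expansion, the hypothesis $q(X_z)\neq0$ keeping each hypergeometric argument strictly inside $(0,1)$. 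The two steps I expect to demand the most care are justifying the unfolding of the \emph{regularised} integral rigorously --- interchanging the $\widetilde{\Gamma}_\infty\backslash\widetilde{\Gamma}$ summation with the integral and verifying the height-$T$ truncation contributes nothing in the limit --- and the bookkeeping of the Gamma-factors and powers of $4\pi$ so that they telescope into the clean constant above; the Whittaker--Laplace evaluation itself, though conceptually central, is essentially a table lookup once the exponents are pinned down.
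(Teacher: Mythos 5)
Your proposal follows essentially the same route as the paper's proof: decompose $f$ into Maass--Poincar\'e series, apply Lemma \ref{Lemma: R_k^n acting on F_m,h} to shift to weight $k$ at the spectral point $s=1-\frac{k}{2}+j$, unfold against $\overline{\Theta_L}$, and evaluate the resulting Whittaker--Laplace transform via the classical table formula (the paper cites Erd\'elyi et al.\@ rather than Gradshteyn--Ryzhik, but it is the same identity), with the parameter checks and the simplification of the hypergeometric argument to $\frac{q(X)}{q(X_{z^\perp})}$ all matching. The only minor quibble is that the absolute convergence of the final series rests on the decay of the factor $\left(-q(X_{z^\perp})\right)^{-\frac{n}{2}-j}$ over the (finitely many) hyperboloids $q(X)=-m$ in the principal part, as in \cite[Theorem 3.1]{BS}, rather than on any exponential decay; otherwise the argument is correct.
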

\begin{proof}
	We consider the regularized theta lift of the Maass--Poincar\'e series $F_{\mu,-m,k-2j,s}$. Applying Lemma~\ref{Lemma: R_k^n acting on F_m,h} we obtain
	\begin{equation*}
	\Lambda^{\text{reg}} \left(F_{\mu,-m,k-2j,s}, z\right) 
	= (4\pi m)^{j} \frac{\Gamma\left(s+\frac{k}{2}\right)}{\Gamma\left(s+\frac{k}{2}-j\right)} \int_{\mathcal{F}}^{\reg} \left\langle  F_{\mu,-m,k,s}\left(\tau\right), \overline{\Theta_L (\tau,z)} \right\rangle v^{k} d\mu(\tau).
	\end{equation*}
	The usual unfolding argument yields the above expression as
	\begin{align*}
	2(4\pi m)^{j} \frac{\Gamma\left(s+\frac{k}{2}\right)}{\Gamma(2s) \Gamma\left(s+\frac{k}{2}-j\right)} \int_{0}^{\infty} \int_{0}^{1} \mathcal{M}_{k,s}(-4 \pi m v)e(-mu)   \overline{\Theta_{L,\mu}( \tau, z)} v^{k-2} dudv,
	\end{align*}
	where $\Theta_{L,\mu}$ denotes the $\mu$-th component of $\Theta_L$. Inserting \eqref{Definition: Siegel theta} and \eqref{curlyM}, after evaluating the integral over $u$ we are left with
	\begin{align*}
	2(4\pi m)^{j-\frac{k}{2}} \frac{\Gamma\left(s+\frac{k}{2}\right)}{\Gamma(2s) \Gamma\left(s+\frac{k}{2}-j\right)} \sum_{\substack{X \in L+\mu \\ q(X) = -m}} \int_{0}^{\infty}   M_{-\frac{k}{2},s-\frac{1}{2}}(4 \pi m v) v^{-\frac{3}{2}-\frac{k}{2}}   e^{-2 \pi  v\left(q(X_z) - q\left(X_{z^\perp}\right)\right)} dv.
	\end{align*}
The integral is a Laplace transform. Using equation (11) on page 215 of \cite{IntegralTransforms} we obtain
\begin{multline*}
2(4\pi m)^{s+j-\frac{k}{2}} \frac{\Gamma\left(s+\frac{k}{2}\right) \Gamma(s-\frac{1}{2}-\frac{k}{2})}{\Gamma(2s) \Gamma\left(s+\frac{k}{2}-j\right)} \\
\times \sum_{\substack{X \in L+\mu \\ q(X) = -m}} \left(-4 \pi q(X_{z^\perp})\right)^{\frac{k}{2}+\frac{1}{2}-s} {}_2F_1\left(s-\frac{1}{2}-\frac{k}{2}, s+\frac{k}{2};2s;\frac{-m}{q(X_{z^\perp})}\right).
\end{multline*}
To obtain the statement of the theorem, we plug in the point $s = 1-\frac{k}{2}+j$ and use that $f$ may be written as a linear combination of the Maass--Poincar\'{e} series as in Section \ref{Section: Maass--Poincare series}. Convergence follows similarly to that of \cite[Theorem 3.1]{BS}.
\end{proof}

\begin{remark}
For small values of $n$ and $j$ the Gauss hypergeometric function on the right-hand side can be evaluated explicitly. For example, choose $n=1$. Then by \cite[Eq.~15.4.17]{NIST} and using that ${}_2F_1$ is symmetric in the first two arguments, we have that
	\begin{align*}
	{}_2F_1\left( \frac{1}{2}+j, 1+j;2+2j;\frac{q(X)}{q(X_{z^\perp})}\right) = \left(\frac{1}{2}+\frac{1}{2}\left(1- \frac{q(X)}{q(X_{z^\perp})}\right)^{\frac{1}{2}}\right)^{-1-2j} .
	\end{align*}
	For other values of $n$ and $j$, one can use transformations of ${}_2F_1$ along with the contiguous hypergeometric functions to obtain a (lengthy) description in terms of linear combinations of more elementary functions. The coefficients in the linear combination are rational.
\end{remark}

\subsection{Evaluating the theta lift at special points}

We now evaluate the theta integral at special points. Recall that at a special point $w$ we have the positive-definite one-dimensional lattice $P = L \cap w$ and negative-definite $n$ dimensional lattice $N = L \cap w^\perp$. Recall that $\mathcal{G}_P$ denotes a harmonic Maass form of weight $\frac{3}{2}$ for $\rho_P$ that maps to $\Theta_P$ under $\xi_{\frac 32}$. We denote its holomorphic part by $\mathcal{G}_P^+$. For simplicity, we now assume that the input $f$ for the regularized theta lift is weakly holomorphic (if this is not the case, then one obtains another integral on the right-hand side). 
\begin{theorem}\label{Theorem: theta lift at CM points}
	Let $f \in M_{k-2j, L}^!$. We have 
	\begin{align*}	\Lambda^{\reg}_j \left(f, w \right)
	= \frac{\pi^{\frac{1}{2}} \Gamma\left(1+j\right)  }{2 \Gamma\left( \frac{3}{2} + j \right)} (4\pi)^{j}  \CT\left(\left\langle f_{P \oplus N}(\tau) ,  \left[ \mathcal{G}_{P}^+(\tau), \Theta_{N^{-}}(\tau)\right]_{j}  \right\rangle \right).
	\end{align*}
\end{theorem}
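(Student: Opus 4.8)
The plan is to specialize the lift to $z=w$, reduce it to an integral whose integrand is a total derivative, and then extract the constant term by Stokes' theorem. First I would insert the splitting \eqref{Equation: splitting}, giving $\overline{\Theta_L(\tau,w)} = \overline{\Theta_P(\tau)} \otimes v^{n/2}\Theta_{N^-}(\tau)$. Since the Maass raising operator acts componentwise (hence commutes with the restriction map $\mathrm{res}_{L/(P\oplus N)}$), combining \eqref{Equation: trace map for Theta_L} with the adjunction in Lemma~\ref{Lemma: restriction and trace maps} replaces $f$ by $f_{P\oplus N}$ and $\Theta_L$ by $\Theta_{P\oplus N}(\tau,w)$, so that $\Lambda^{\reg}_j(f,w) = \int_{\mathcal{F}}^{\reg}\langle R^j_{k-2j}(f_{P\oplus N}), \overline{\Theta_P}\otimes v^{n/2}\Theta_{N^-}\rangle\, v^k\, d\mu$.

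Next I would trade $\overline{\Theta_P}$ for $\mathcal{G}_P$. From $\Theta_P=\xi_{3/2}(\mathcal{G}_P)$ and $\xi_\kappa(g)=v^{\kappa-2}\overline{L_\kappa(g)}$ we get $\overline{\Theta_P}=v^{-1/2}L_{3/2}(\mathcal{G}_P)$, and because $k+\tfrac{n}{2}-\tfrac12=0$ the powers of $v$ collapse, leaving the weight-zero integrand $\langle R^j(f_{P\oplus N}), L_{3/2}(\mathcal{G}_P)\otimes\Theta_{N^-}\rangle\, d\mu$. The constant term should come from the weight-two function $\Xi\coloneqq\langle f_{P\oplus N}, [\mathcal{G}_P,\Theta_{N^-}]_j\rangle$: since $f$ is weakly holomorphic ($Lf=0$) and $\Theta_{N^-}$ is holomorphic ($L\Theta_{N^-}=0$), the product rule for $L$ together with Proposition~\ref{Proposition: lowering of RC brackets} (whose second term now vanishes) give $L_2(\Xi)=\tfrac{\Gamma(3/2+j)}{(-4\pi)^j\Gamma(j+1)\Gamma(3/2)}\,\langle f_{P\oplus N}, L_{3/2}(\mathcal{G}_P)\otimes R^j_{n/2}(\Theta_{N^-})\rangle$.

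The analytic engine is that for any weight-two $\Gamma$-invariant $\Xi$ one has $d(\Xi\,d\tau)=-L_2(\Xi)\,d\mu$, so Stokes' theorem on the truncated domain $\mathcal{F}_T$ kills the vertical sides and lower arcs by modularity and leaves only the top edge, whence $\int_{\mathcal{F}}^{\reg}L_2(\Xi)\,d\mu=\CT(\Xi)$. Comparing with the previous display, it remains to match the two integrands, i.e.\ to move the $j$ raising operators from $f_{P\oplus N}$ across the pairing onto $\Theta_{N^-}$. This is the heart of the matter, and the main obstacle: using that $R$ is a derivation for the tensor product and is adjoint to $L$ under the regularized pairing, a sequence of $j$ integrations by parts (invoking $Lf=0$ to discard the terms hitting $f$) transfers the operators, producing a sign $(-1)^j$ together with boundary terms; one must verify that each intermediate boundary term, which after the same Stokes computation carries a factor $v^{-2}$ at the cusp, genuinely vanishes in the limit $T\to\infty$ under the regularization (the exponential growth of the principal part of $f$ enters only non-constant Fourier modes and so does not survive).

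Finally I would collect constants and pass from $\mathcal{G}_P$ to $\mathcal{G}_P^+$. The constant term only sees the holomorphic $q$-expansion, and as $f$ and $\Theta_{N^-}$ are holomorphic the non-holomorphic part $\mathcal{G}_P^-$ contributes nothing, so $\CT(\Xi)=\CT\langle f_{P\oplus N}, [\mathcal{G}_P^+,\Theta_{N^-}]_j\rangle$. Undoing the factor in the formula for $L_2(\Xi)$ then gives $\Lambda^{\reg}_j(f,w)=(-1)^j\tfrac{(-4\pi)^j\Gamma(j+1)\Gamma(3/2)}{\Gamma(3/2+j)}\,\CT\langle f_{P\oplus N}, [\mathcal{G}_P^+,\Theta_{N^-}]_j\rangle$, where the $(-1)^j$ from the transfer cancels the sign in $(-4\pi)^j$; using $\Gamma(3/2)=\tfrac{\sqrt\pi}{2}$ this is exactly $\tfrac{\pi^{1/2}\Gamma(1+j)}{2\Gamma(3/2+j)}(4\pi)^j$, as claimed.
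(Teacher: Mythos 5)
Your proposal is correct and follows essentially the same route as the paper's (sketched) proof: reduce to $P\oplus N$ via Lemma~\ref{Lemma: restriction and trace maps}, transfer the iterated raising operators by self-adjointness with vanishing boundary terms, use the splitting at $w$ together with $L(\Theta_{N^-})=0$ and Proposition~\ref{Proposition: lowering of RC brackets} to recognize the integrand as a lowering of the Rankin--Cohen bracket, and finish with Stokes' theorem; the constants also match. The only cosmetic difference is the order of operations (you split first and then integrate by parts, while the paper integrates by parts against the full $\Theta_L$ and then splits), which does not change the substance.
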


\begin{remark}
	For fixed $j$, one can evaluate the constant term as sums over lattice vectors as in \cite[Remark 3.6]{BS}. However, the terms arising from the Rankin--Cohen bracket quickly become unwieldy for large values of $j$.
\end{remark}

\begin{proof}[Proof of Theorem \ref{Theorem: theta lift at CM points}]
	The proof is similar to those of \cite[Theorem 5.4]{bruinier2020greens} and \cite[Theorem 4.1]{ANBMS} and so we only sketch the details, for the convenience of the reader. We infer from Lemma~\ref{Lemma: restriction and trace maps} and \eqref{Equation: trace map for Theta_L} that
	\begin{equation*}
	\langle f, \Theta_L \rangle = \left\langle f, (\Theta_{P \oplus N})^L\right\rangle = \langle f_{P \oplus N}, \Theta_{P\oplus N}\rangle.
	\end{equation*}
	This means that one may assume that $L = P \oplus N$ if $f$ is replaced by $f_{P \oplus N}$. For succinctness, we write $f$ instead of $f_{P \oplus N}$ throughout.
	
	The first step is to use the self-adjointness of the raising operator given in \cite[Lemma~4.2]{bruinier2004borcherds} to obtain
	\begin{align*}
	\int_{\mathcal{F}}^{\reg} \left\langle R_{k-2j}^{j} (f)(\tau) , \overline{\Theta(\tau,w)} \right\rangle v^{k} d\mu(\tau) 
	= & (-1)^{j} \int_{\mathcal{F}}^{\reg} \left\langle  f(\tau) , R_{-k}^{j} \left(v^{k}\overline{\Theta_L(\tau,w)}\right) \right\rangle  d\mu(\tau),
	\end{align*}
	after noting that the apparent boundary term appearing disappears in the same way as in the proof of \cite[Lemma~4.4]{bruinier2004borcherds}. We next require the formula
	\begin{align}\label{Eqn: splitting}
	R_{\ell-\kappa}\left(v^{\kappa}\overline{g(\tau)} \otimes h(\tau)\right) = v^\kappa \overline{g(\tau)} \otimes R_\ell(h)(\tau)
	\end{align}
	which holds for every holomorphic function $g$, every smooth function $h$, and $\kappa,\ell \in \R$.
	Then using \eqref{Equation: splitting} and \eqref{Eqn: splitting} yields
	\begin{align*}
	R_{-k}^{j} \left( v^{-k} \overline{\Theta_{P \oplus N} (\tau,w)} \right) = L_\frac{3}{2} (\mathcal{G}_{P}) (\tau)\otimes R_{1-k}^{j} (\Theta_{N^{-}}) (\tau).
	\end{align*}

Next we use the fact that $L_1(\Theta_{N^-}) = 0$ along with Proposition~\ref{Proposition: lowering of RC brackets} together to infer that
	\begin{equation*}
	L_\frac{3}{2}(\mathcal{G}_P)(\tau) \otimes R_{1-k}^{j}(\Theta_{N^{-}}) (\tau) = \frac{\pi^{\frac{1}{2}} \Gamma\left(j+1\right)  }{2 \Gamma\left( \frac{3}{2} + j \right)} (-4\pi)^{j} L_{\frac{5}{2}-k+2j} \left(\left[ \mathcal{G}_P(\tau) , \Theta_{N^{-}}(\tau)\right]_{j}\right).
	\end{equation*}
	Therefore, overall we have
	\begin{multline*}
	\int_{\mathcal{F}}^{\reg} \left\langle R_{k-2j}^{j} (f)(\tau) , \overline{\Theta_L(\tau,w)} \right\rangle v^{k} d\mu(\tau) \\
	=  \frac{\pi^{\frac{1}{2}} \Gamma\left(j+1\right)  }{2 \Gamma\left( \frac{3}{2} + j \right)} (4\pi)^{j}  \int_{\mathcal{F}}^{\reg} \left\langle f(\tau),  L_{\frac{5}{2}-k+2j}\left(\left[ \mathcal{G}_P(\tau) , \Theta_{N^{-}}(\tau)\right]_{j}\right) \right\rangle d \mu(\tau).
	\end{multline*}
	Applying Stokes' Theorem as in the proof of \cite[Proposition~3.5]{bruinierfunke2004} yields the result, noting that apparent boundary terms arising vanish because $f$ is weakly holomorphic.
\end{proof}

\subsection{The Fourier expansion}
To end this section, we remark how one could obtain the Fourier expansion of the lift $\Lambda^{\reg}_j(f,z)$ for any $f \in H_{k-2j,L}^!$. This, however, would be a lengthy but completely clear calculation that we do not require in the present paper, and so we omit the details. 

Firstly, note that Lemma \ref{Lemma: R_k^n acting on F_m,h} implies that one can rewrite the lift as a linear combination of the lift
\begin{align*}
\int_{\mathcal{F}}^{\reg} \left\langle  F_{\mu,-m,k,s}\left(\tau\right), \overline{\Theta_L (\tau,z)} \right\rangle v^{k} d\mu(\tau).
\end{align*}
Recall that we evaluate at $s = 1-\frac{k}{2}+j$. In the case of $j=0$ the Fourier expansion of this lift was calculated by Bruinier in his celebrated habilitation \cite{bruinier2004borcherds}. One could then proceed as in the proof of \cite[Theorem 2.15 and Proposition 3.1]{bruinier2004borcherds} to obtain a Fourier expansion for $\Lambda^{\reg}(f,z)$ which is of a similar shape to the $j=0$ case. Finally, one could also translate this to the hyperbolic model of the Grassmanian in the same fashion as \cite[Theorem 3.3]{BS}.

	\section{Eichler-Selberg type relations}\label{Sec: mertens}
We are now in a position to use our techniques to offer proofs of Theorem \ref{Thm: MAIN} along with Theorems \ref{Thm: mertens} and \ref{Thm: AK}. Let $n=1$ throughout this section. 

\begin{proof}[Proof of Theorem \ref{Thm: MAIN}]
	For any $f \in H^{\text{cusp}}_{k-2j,L}$ by Theorem \ref{Theorem: lift as series} and the remark after it we have that
	\begin{align*}
	\Lambda_j^{\reg}(f,z) = 4^{1+2j} \pi^{\frac{1}{2}+j}  \frac{\Gamma(1+j)\Gamma\left(\frac{1}{2}+j\right)}{\Gamma(2+2j)} \sum_{\substack{X \in L'\\q(X)<0}} c_f^+(X,q(X)) \left(\sqrt{\lvert q(X_{z^\perp}) \rvert} - \sqrt{\lvert q(X_{z}) \rvert}\right)^{1+2j}.
	\end{align*}
	Simply combine this with Theorem \ref{Theorem: theta lift at CM points}, take the difference and apply Proposition \ref{Prop: Serre duality} to yield the statement of the theorem with $C_j = 2^{2j+3} \frac{\Gamma\left(\frac{1}{2}+j\right) \Gamma\left(\frac{3}{2}+j\right)}{\Gamma \left(2+2j\right)}$.
\end{proof}
Finally, we apply these results to see how one can obtain known results for scalar-valued modular forms from the higher Siegel theta lifts. Take the rational quadratic space $\Q^2$ with quadratic form $Q(a,b) =ab$. Then a lattice $L$ in $\Q^2$ has signature $(1,1)$ and is isotropic. For a special point $w$ we follow \cite{BS} and let $y=(y_1,y_2) \in L$ be its primitive generator with $y_1,y_2>0$. Let $y^\perp \in L$ be the primitive generator of $w^\perp$ with positive second coordinate. Then $y^\perp$ is a positive multiple of $(-y_1,y_2)$. Defining $d_P\coloneqq y_1y_2$ and $d_N \coloneqq y_1^\perp y_2^\perp$ we have $P \cong (\Z,d_P x^2)$ and $N \cong (\Z, -d_N x^2)$.
As in \cite{BS}, one may identify 
\begin{align*}
\Theta_P = \theta_{\frac{1}{2},d_P},\qquad \Theta_{N^-} = \theta_{\frac{1}{2},d_N}.
\end{align*}
\begin{proof}[Proof of Theorem \ref{Thm: mertens}]
	 We choose the lattice $L = (\Z, x^2) \oplus (\Z, -y^2)$ which we view as a sublattice of $(\R^2,xy)$. The lattice $L$ has a group ring isomorphic to $(\Z/2\Z)^2$ which we label by $\e_{(t,r)}$, and we choose the point $(y_1,y_2) =(1,1)$.
	
	Note the following explicit formulae (recalled from \cite{BS} for the convenience of the reader). We have
	\begin{align*}
	(X,y) = X_2y_1 + X_1y_2,\qquad -2X_1 X_2 |y^2| + (X,y)^2 = (X_1y_2 - X_2y_1)^2,
	\end{align*}
	along with
	\begin{align*}
	q(X_w) = \frac{1}{2} \frac{(X,y)^2}{|y|^2}, \qquad \lvert q(X_{w^\perp}) \rvert = -X_1 X_2 + \frac{1}{2} \frac{(X,y)^2}{|y|^2}.
	\end{align*}
	Combining these yields
	\begin{align*}
	\sqrt{\lvert q(X_{w^\perp}) \rvert} - \sqrt{\lvert q(X_w) \rvert} = \frac{1}{\sqrt{y_1y_2}} \min\left( \lvert X_1y_2 \rvert, \lvert X_2 y_1 \rvert \right).
	\end{align*}
	
	 Let $f$ be a weakly holomorphic modular form of appropriate weight. Then Theorems \ref{Theorem: lift as series} and \ref{Theorem: theta lift at CM points} along with the duplication formula for the $\Gamma$-function $\pi^{\frac{1}{2}}\Gamma(2x) = 2^{2x-1}\Gamma(x)\Gamma(x+\frac{1}{2})$ imply that
	\begin{align*}
	\frac{j! }{4\sqrt{\pi} \Gamma\left(\frac{1}{2}+j\right)} \CT\left( \langle f, [\mathcal{G}_P^+,\Theta_{N^-}]_j \rangle\right) = \sum_{\substack{X = (X_1,X_2) \in L' \\ X_1X_2<0}} c_f^+(X,X_1X_2) \min\left(\lvert X_1 \rvert, \lvert X_2 \rvert\right)^{2j+1}.
	\end{align*}
	
	 By results of Zagier \cite{zagier1975eisenstein}, the generating function
	 \begin{align}\label{eqn: H generating fn}
	 \mathcal{G}_P^+(\tau) = -8\pi \left(\sum_{\substack{n \geq 0 \\ n \equiv 0 \pmod 4}}H(n)e\left(\frac{n\tau}{4}\right)\right)\e_0-8\pi \left(\sum_{\substack{n \geq 0 \\ n \equiv 3 \pmod 4}}H(n)e\left(\frac{n\tau}{4}\right)\right)\e_1
	 \end{align}
	 is the holomorphic part of a harmonic Maass form $\mathcal{G}_P$ of weight $\frac{3}{2}$ for the dual Weil representation $\rho_{L^-}$ which maps to $\Theta_P$ under $\xi_{\frac{3}{2}}$. We plug this in for the choice of $\mathcal{G}_P^+$. Note that $\Theta_{N^-} = \theta_{\frac{1}{2},1}$ and that we pick up a minus sign from \eqref{eqn: H generating fn}.
	
	Concentrating on the Rankin--Cohen bracket, we have four components, given by
	\begin{align*}
 \left[\sum_{\substack{n \geq 0 \\ n \equiv -t^2 \pmod{4}}} H(n) e\left(\frac{n\tau}{4}\right), \sum_{ \substack{n \in \Z \\ n \equiv r \pmod{2}}} e\left(\frac{n^2\tau}{4}\right) \right]_j \e_{(t,r)},
	\end{align*}
	for $t,r \pmod{2}$.
	
	 Overall, after dividing the result by $2$ and letting $[\cdot,\cdot]_j^{[n]}$ denote the $n$-th coefficient in the Fourier expansion of the Rankin--Cohen bracket, a short simplification yields that 
	\begin{align*}
	& \frac{j! \sqrt{\pi}}{ \Gamma\left(\frac{1}{2}+j\right)}  \sum_{t,r \pmod{2}} \sum_{m \geq 0} c_f^+( (t,r),-m/4) \left[\sum_{\substack{n \geq 0 \\ n \equiv -t^2 \pmod{4}}} H(n) e\left(\frac{n\tau}{4}\right), \sum_{ \substack{n \in \Z \\ n \equiv r \pmod{2}}} e\left(\frac{n^2\tau}{4}\right) \right]_j^{[m/4]} \e_{(t,r)}
	\\ &+ \frac{1}{2} \sum_{\substack{t,r \pmod{2}}} \sum_{m \geq 0} \sum_{\substack{a,b \in \N \\ ab=m \\ a,b \equiv t+r \pmod{2}}} c_f^+((t,r),-m/4) \min\left(a,b\right)^{2j+1} \e_{(t,r)}
	\end{align*}
	vanishes. We then use Proposition \ref{Prop: Serre duality} to conclude that 
	\begin{align*}
	g(\tau) \coloneqq \sum_{\substack{t,r \pmod{2}}} \sum_{m \geq 0} c_g( (t,r), m)  q^{\frac{m}{4}} \mathfrak{e}_{t,r}
	\end{align*}
	where
	\begin{multline*}
	c_g((t,r),m) \coloneqq 
	 c_j \left[\sum_{\substack{n \geq 0 \\ n \equiv -t^2 \pmod{4}}} H(n) e\left(\frac{n\tau}{4}\right), \sum_{ \substack{n \in \Z \\ n \equiv r \pmod{2}}} e\left(\frac{n^2\tau}{4}\right) \right]_j^{[m/4]} \\
	 + \frac{1}{2} \sum_{\substack{a,b \in \N \\ ab=m \\ a,b \equiv t+r \pmod{2}}} \min\left(a,b\right)^{2j+1}
	\end{multline*}
	is a holomorphic modular form of weight $2j+2$ for $\rho_L$. Summing the components $\mathfrak{e}_{(1,0)}$ and $\mathfrak{e}_{(0,1)}$ and shifting $\tau \mapsto 4\tau$ yields precisely the expression in the statement of the theorem. Moreover, a direct computation using the Weil representation shows that this is a modular form of weight $2j+2$ on $\Gamma_0(4)$. That it is a cusp form is also clear.
\end{proof}

\begin{remark}
	The above calculations show that $g(\tau)$ has an even part given by the other two components labeled by $(0,0)$ and $(1,1)$. Tracing this through to the scalar-valued case, we obtain that
	\begin{align*}
	c_j[\mathcal{H},\vartheta]_j^{\text{even}} + \sum_{m \geq 0} \sum_{\substack{a,b \in \N \\ ab=4m}} \min(a,b)^{2j+1} q^{4m}
	\end{align*}
	is a holomorphic modular form of weight $2j+2$ on $\Gamma_0(4)$.
\end{remark}

	Further examples could also be computed by combining the techniques in \cite{BS} with those in the current paper. As remarked previously, for small values of $j$, these can be written explicitly, but for larger values of $j$ the terms arising from the Rankin--Cohen brackets quickly become unwieldy. With $\boldsymbol{x} =(x_1, \dots, x_j) \in \Z^j$, $p$ some polynomial and $Q$ a quadratic form, one could obtain formulae for sums of the shape
	\begin{align*}
	\sum_{\boldsymbol{x} \in \Z^j} p(\boldsymbol{x}) H(t-Q(\boldsymbol{x}))
	\end{align*}
	in terms of linear combinations (with rational coefficients) of divisor power sums. This complements the large literature on recurrences for Hurwtiz class numbers. It is also clear that one could derive similar examples for other generating functions of weight $\frac{3}{2}$.

\begin{proof}[Proof of Theorem \ref{Thm: AK}]
	The proof is very similar to that of Theorem \ref{Thm: mertens}, and so we only offer a sketch and leave the details to the interested reader. We choose the lattice $L=(\Z,6x^2) \oplus(\Z,-6y^2)$ viewed as a sublattice of $(\R^2,xy)$ as before. Then $L$ has a group ring isomorphic to $\Z/12\Z$. Recall that
	\begin{align*}
	g(\tau) = q^{-\frac{1}{24}} \sum_{n \geq 0} \left( \spt\left(n\right) +\frac{1}{12}(24n-1)p(n) \right) q^{n}.
	\end{align*}
	It is known that $g$ is the holomorphic part of weight $\frac{3}{2}$ harmonic Maass form $F$ on $\Gamma_0(576)$ that maps to $(-\sqrt{6}/4) \eta$ under $\xi_{\frac{3}{2}}$, see \cite{Bri}. As in \cite{BS}, we see that
	\begin{align*}
	\sum_{r \pmod{12}} \leg{12}{r} F(\tau) \mathfrak{e}_r = F(\tau)(\e_1 -\e_5-\e_7+\e_{11})
	\end{align*}
	is a harmonic Maass form of weight $\frac{3}{2}$ for the Weil representation of the lattice $(\Z,6x^2)$. Moreover, under the action of $\xi_{\frac{3}{2}}$ it maps to
	\begin{align*}
	-\frac{\sqrt{6}}{4} \left( \theta_{\frac{1}{2},6} - \theta_{\frac{1}{2},6}^{\sigma_2} \right).
	\end{align*}
	
One evaluates the lift at the special points $y=(1,6)$ and $y=(2,3)$ and takes the difference. As before, this results in two different evaluations, one in terms of a series of minima, and one as a constant term in a Fourier expansion. After shifting the constants to the $\CT$ term, one proceeds as in the proof of Theorem \ref{Thm: mertens} to use Proposition \ref{Prop: Serre duality} to obtain a modular form of weight $2j+2$ whose coefficients are essentially a rescaling of those stated in the theorem. Making use of the fact that $\eta$ may be written as the difference of two unary theta functions yields precisely the function in the statement of the theorem (up to rescaling by a constant factor, and after rewriting the sum over minima by case distinctions and using the duplication formula for the $\Gamma$-function twice). This time, one does not need to shift $\tau$, and so obtains a modular form on $\SL_2(\Z)$.
\end{proof}

\begin{bibsection}
\begin{biblist}

\bib{abramowitz1988handbook}{collection}{
	author={Abramowitz, M.},
	author={Stegun, I.},
	title={Handbook of mathematical functions with formulas, graphs, and
		mathematical tables},
	series={National Bureau of Standards Applied Mathematics Series},
	volume={55},
	date={1964},
	pages={xiv+1046}
}

\bib{AhlKim}{article}{
	AUTHOR = {Ahlgren, S.},
	author={Kim, B.},
	TITLE = {Congruences for a mock modular form on {$\rm{SL}_2(\Z)$}
		and the smallest parts function},
	JOURNAL = {J. Number Theory},
	FJOURNAL = {Journal of Number Theory},
	VOLUME = {189},
	YEAR = {2018},
	PAGES = {81--89},
}

\bib{ANBMS}{article}{
	title={Cycle integrals of meromorphic modular forms and coefficients of harmonic Maass forms}, 
	author={ Alfes-Neumann, C.},
	author={Bringmann, K.},
	author={Males, J.},
	author={Schwagenscheidt, M.},
	journal={J. Math. Anal. Appl.},
	note={to appear}
}

\bib{alfes2018rationality}{article}{
	author={Alfes-Neumann, C.},
	author={Bringmann, K.},
	author={Schwagenscheidt, M.},
	title={On the rationality of cycle integrals of meromorphic modular
		forms},
	journal={Math. Ann.},
	volume={376},
	date={2020},
	number={1-2},
	pages={243--266},
	issn={0025-5831},
}

\bib{And}{article}{
	AUTHOR = {Andrews, G.},
	TITLE = {The number of smallest parts in the partitions of {$n$}},
	JOURNAL = {J. Reine Angew. Math.},
	FJOURNAL = {Journal f\"{u}r die Reine und Angewandte Mathematik. [Crelle's
		Journal]},
	VOLUME = {624},
	YEAR = {2008},
	PAGES = {133--142},
}


\bib{Bri}{article}{
	AUTHOR = {Bringmann, K.},
	TITLE = {On the explicit construction of higher deformations of
		partition statistics},
	JOURNAL = {Duke Math. J.},
	FJOURNAL = {Duke Mathematical Journal},
	VOLUME = {144},
	YEAR = {2008},
	NUMBER = {2},
	PAGES = {195--233},
}




\bib{bruinier2004borcherds}{book}{
	author={Bruinier, J. H.},
	title={Borcherds products on O(2, $l$) and Chern classes of Heegner
		divisors},
	series={Lecture Notes in Mathematics},
	volume={1780},
	publisher={Springer-Verlag, Berlin},
	date={2002},
	pages={viii+152},
	isbn={3-540-43320-1},
}

\bib{bruinier2020greens}{article}{
author={Bruinier, J. H.},
author={Ehlen, S.},
author={Yang, T.},
title={CM values of higher automorphic Green functions on orthogonal groups}, 
JOURNAL = {Invent. Math.},
FJOURNAL = {Inventiones Mathematicae},
VOLUME = {225},
YEAR = {2021},
NUMBER = {3},
PAGES = {693--785}
}
\bib{bruinierfunke2004}{article}{
	author={Bruinier, J. H.},
	author={Funke, J.},
	title={On two geometric theta lifts},
	journal={Duke Math. J.},
	volume={125},
	date={2004},
	pages={45--90},
}

\bib{BIF}{article}{
	AUTHOR = {Bruinier, J. H.},
	author={Funke, J.},
	author={Imamo\={g}lu, \"{O}.},
	TITLE = {Regularized theta liftings and periods of modular functions},
	JOURNAL = {J. Reine Angew. Math.},
	FJOURNAL = {Journal f\"{u}r die Reine und Angewandte Mathematik. [Crelle's
		Journal]},
	VOLUME = {703},
	YEAR = {2015},
	PAGES = {43--93},
}


\bib{BS}{article}{
	title={Theta lifts for Lorentzian lattices and coefficients of mock theta functions},
	journal={Math. Z.},
	publisher={Springer Science and Business Media LLC},
	author={Bruinier, J.},
	author={Schwagenscheidt, M.},
	year={2020},
	month={Jul}
}

\bib{bruinierFaltings}{article}{
	author={Bruinier, J.},
	author={Yang, T.},
	title={Faltings heights of CM cycles and derivatives of $L$--functions},
	journal={Invent. Math.},
	volume={177},
	date={2009},
	number={3},
	pages={631--681},
}

\bib{Cohen}{article}{
	author={Cohen, H.},
	title={Sums involving the values at negative integers of $L$-functions of
		quadratic characters},
	journal={Math. Ann.},
	volume={217},
	date={1975},
	number={3},
	pages={271--285},
}


\bib{IntegralTransforms}{book}{
	author={Erd\'{e}lyi, A.},
	author={Magnus, W.},
	author={Oberhettinger, F.},
	author={Tricomi, F.},
	title={Tables of integral transforms. Vol. I},
	note={Based, in part, on notes left by Harry Bateman},
	publisher={McGraw--Hill Book Company, Inc., New York--Toronto-London},
	date={1954},
	pages={xvi+451},
}

%
%


\bib{LiSch}{article}{
	title={Mock Modular Forms with Integral Fourier Coefficients}, 
	author={Li, Y.},
	author={Schwagenscheidt, M.},
	note={Adv. Math., to appear.}
}

%


\bib{Mer1}{article}{
	author={Mertens, M.},
	title={Eichler-Selberg type identities for mixed mock modular forms},
	journal={Adv. Math.},
	volume={301},
	date={2016},
	pages={359--382},
}

\bib{Mer}{article}{
	AUTHOR = {Mertens, M.},
	TITLE = {Mock modular forms and class number relations},
	JOURNAL = {Res. Math. Sci.},
	FJOURNAL = {Research in the Mathematical Sciences},
	VOLUME = {1},
	YEAR = {2014},
	PAGES = {Art. 6, 16},
}


\bib{NIST}{misc}{
	key = {\relax DLMF},
	title = {\it NIST Digital Library of Mathematical Functions},
	howpublished = {http://dlmf.nist.gov/, Release 1.1.0 of 2020-12-15},
	url = {http://dlmf.nist.gov/},
	note = {F.~W.~J. Olver, A.~B. {Olde Daalhuis}, D.~W. Lozier, B.~I. Schneider,
	R.~F. Boisvert, C.~W. Clark, B.~R. Miller, B.~V. Saunders,
	H.~S. Cohl, and M.~A. McClain, eds.}
}


\bib{Zag}{article}{
	author={Zagier, D.},
	title={Modular forms and differential operators},
	journal={Proc. Indian Acad. Sci. Math. Sci.},
	volume={104},
	date={1994},
	number={1},
	pages={57--75},
}

\bibitem{zagier1975eisenstein}
D. Zagier, 
\emph{Nombres de classes et formes modulaires de poids $3/2$},
{C. R. Acad. Sci. Paris S\'{e}r. A-B}
{\bf 281}
(1975),
{no. 21},
{Ai, A883--A886},
(French, with English summary).

\end{biblist}
\end{bibsection}

\end{document}